\documentclass[12pt, reqno]{amsart}
\usepackage{amssymb,a4}
\usepackage{amsmath,amsthm,amscd}
\usepackage{amsthm}
\usepackage{amsfonts,amssymb}
\usepackage{mathrsfs}
\usepackage{amsmath}
\numberwithin{equation}{section}
\newtheorem{theo}{Theorem}[section]
\newtheorem{defi}[theo]{Definition}

\newtheorem{lemm}[theo]{Lemma}

\begin{document}

\title[2-local derivations on  infinite-dimensional Lie algebras]{2-Local derivations on the Super Virasoro algebra
 and Super ${\rm W(2,2)}$ algebra}

\author{ Munayim Dilxat}
\address{College of Mathematics and System Sciences, Xinjiang University, Urumqi, Xinjiang,830046, China}
\email{18396830969@163.com}

\author{Shoulan Gao}
\address{Department of Mathematics, Huzhou University, Huzhou, Zhejiang, 313000, China}
\email{gaoshoulan@zjhu.edu.cn}

\author{Dong Liu}
\address{Department of Mathematics, Huzhou University, Huzhou, Zhejiang, 313000, China}
\email{liudong@zjhu.edu.cn}

\date{}
\maketitle

 {\bf {\scriptsize Abstract:}}
 The present paper is devoted to study 2-local superderivations on the super Virasoro algebra and the super ${\rm W(2,2)}$ algebra. We prove that all 2-local superderivations on the super Virasoro algebra as well as the super ${\rm W(2,2)}$ algebra are  (global) superderivations.

 {\bf {\scriptsize Key Words:}} {\scriptsize Lie superalgebra, super ${\rm W(2,2)}$ algebra, 2-Local superderivation,  superderivation.}

{\bf {\scriptsize MSC:}} {\scriptsize  17B40, 17B65 }

   \section{Introduction }

In 1997, \v{S}emrl \cite{Sem} introduced the notion of  2-local derivations on algebras. Namely, a map \(\Delta : \mathcal{L} \to
\mathcal{L}\) (not necessarily linear) on an algebra \(\mathcal{L}\) is called a \textit{2-local
derivation} if, for every pair of elements \(x,y \in  \mathcal{L},\) there exists a
derivation \(D_{x,y} : \mathcal{L} \to \mathcal{L}\) such that
\(D_{x,y} (x) = \Delta(x)\) and \(D_{x,y}(y) = \Delta(y)\).  For a
given algebra \(\mathcal{L}\), the main problem concerning
these notions is to prove that they automatically become
derivations or to give examples of 2-local derivations of \(\mathcal{L},\)
which are not derivations. Solutions
of such problems for finite-dimensional Lie algebras over
algebraically closed field of zero characteristic were obtained in
\cite{AKR}. Namely, in
\cite{AKR} it is proved that every 2-local derivation on a
semi-simple Lie algebra is a derivation and that
each finite-dimensional nilpotent Lie algebra with dimension
larger than two admits 2-local derivation which is not a
derivation. Recently, there are some studies on 2-local derivations on the infinite-dimensional Lie algebras
\cite{AY, XT, ZCZ}.
The authors prove that 2-local derivations on the Witt algebra \cite{AY}, some class of generalized Witt algebra (or their Borel subalgebras) \cite{ZCZ} and W-algebra {\rm W(2,2)} \cite{XT} are derivations.

However, 2-local derivations for many Lie superalgebras are not studied up to now.
In the present paper,  we study 2-local superderivations on some
infinite-dimensional Lie superalgebras.

This paper is arranged as follows. In Section 2, we give some preliminaries concerning the super Virasoro algebra. In Section 3,
we prove that every 2-local superderivation on the super Virasoro  algebra is  automatically
a superderivation. In Section 4,
we prove that every 2-local superderivation on  the super ${\rm W(2,2)}$ algebra is also a superderivation.

Throughout this paper, we shall use $\mathbb C, \mathbb Z$ to denote the sets of the complex numbers and  the integers, respectively. All algebras (vector spaces) are based on the field $\mathbb C$.

 \section{Preliminaries}
In this section we give some necessary definitions and preliminary results.

\begin{defi}\cite{Kac,Sch}  Let $L$ be a Lie superalgebra, we call  a linear map $ D: L \rightarrow L $ a superderivation of $L$ if
\begin{equation*}%\label{(1.9)}
  D([x, y])= [D(x),y]+(-1)^{|D||x|} [x,D(y)], \forall x,y\in L.
\end{equation*}
\end{defi}
 Denote by ${\rm Der}(L)$ and ${\rm Der}_{\tau}(L)$ the set of all superderivations and all superderivations of degree $\tau$ of $L$ $(\tau\in\mathbb{Z}_{2})$, respectively. Obviously, ${\rm Der}(L)={\rm Der}_{\overline{0}}(L)\bigoplus {\rm Der}_{\overline{1}}(L)$.
 For all $a\in L$, the map ${\rm ad}(a)$ on $L$ defined as ${\rm ad}(a)x=[a,x], x \in L$ is a superderivation and superderivations of this form are called inner superderivations. Denote by ${\rm IDer}(L)$ the set of all inner superderivations of $L$.

 Recall that a map $\Delta: L\rightarrow L$ (not  linear in general)  is called a 2-local superderivation if, for every pair of elements $x,y\in L$, there exists a superderivation  $D_{x,y}: L\rightarrow L$ (depending on $x,y$)  such that $D_{x,y}(x)=\Delta(x)$ and $D_{x,y}(y)=\Delta(y)$.
  For a 2-local superderivation on $L$ and $k\in\mathbb{C}, x\in L$, we have
\begin{equation*}%\label{(1)}
\Delta(kx)= D_{x,kx}(kx)=kD_{x,kx}(x)=k\Delta(x).
\end{equation*}

Superconformal algebras have a long history in mathematical physics. The simplest examples, after the Virasoro algebra itself
(corresponding to $N = 0$) are the $N = 1$ superconformal algebras: the Neveu-Schwarz algebra ($\epsilon=\frac12$) and the
Ramond algebra ($\epsilon=0$). These infinite dimensional Lie superalgebras are also called the super-Virasoro algebras as they can be regarded as natural  super generalizations of the Virasoro algebra.

\begin{defi} For $\epsilon=0, \frac12$,  the super Virasoro algebra ${\rm SVir}[\epsilon]$ is a Lie superalgebra spanned by $ \{ L_{m}, G_{r}, C \mid m\in \mathbb{Z}, r\in\mathbb{Z}+\epsilon\}$, equipped with the following relations:
\begin{equation*}%\label{2.1}
[L_{m}, L_{n}]= (m-n)L_{m+n}+\frac{1}{12}\delta_{m+n, 0}(m^{3}-m)C,
\end{equation*}
\begin{equation*}%\label{2.2}
[L_{m}, G_{r}]= (\frac{m}{2}-r)G_{m+r},
\end{equation*}
\begin{equation*}%\label{2.6}
[G_{r}, G_{s}]= 2L_{r+s}+\frac{1}{3}\delta_{r+s, 0}(r^{2}-\frac{1}{4})C
\end{equation*}
for all $ m, n \in \mathbb{Z}, \ r, s\in \mathbb{Z}+\epsilon $.
\end{defi}
\begin{lemm}\cite{GMP}\label{lem2.4} For the super Virasoro algebra, we have $${\rm Der(SVir[\epsilon])}={\rm IDer(SVir[\epsilon])}.$$
\end{lemm}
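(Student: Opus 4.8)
The plan is to use the grading of $L:={\rm SVir}[\epsilon]$ by the eigenvalues of ${\rm ad}(L_0)$. From the defining relations one has $[L_0,L_m]=-mL_m$, $[L_0,G_r]=-rG_r$ and $[L_0,C]=0$, so assigning degree $m$ to $L_m$, degree $r$ to $G_r$ and degree $0$ to $C$ realizes $L=\bigoplus_{k\in\frac12\mathbb{Z}}L_k$ as a $\frac12\mathbb{Z}$-graded Lie superalgebra, the $\mathbb{Z}_{2}$-parity being recovered from whether $k$ is integral or half-integral (in the Ramond case $\epsilon=0$ one keeps track of parity separately, since then $L_k$ carries both $L_k$ and $G_k$). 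First I would decompose an arbitrary superderivation $D$ into homogeneous components $D_k$, where $D_k(y):=\pi_{j+k}(D(y))$ for $y\in L_j$ and $\pi_i$ denotes the projection onto $L_i$; a routine check shows that each $D_k$ is again a superderivation of pure parity and that $D=\sum_k D_k$ holds when evaluated on any fixed element, the latter sum being finite because $D(y)\in L$.

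The second step is to show that $D_k$ is inner whenever $k\neq0$. Put $x_k:=D_k(L_0)\in L_k$ and apply $D_k$ to the identity $[L_0,z]=-jz$ for homogeneous $z\in L_j$. Since $L_0$ is even, the super sign $(-1)^{|D_k||L_0|}$ is trivial, and because $D_k(z)\in L_{j+k}$ satisfies $[L_0,D_k(z)]=-(j+k)D_k(z)$, the Leibniz rule collapses to $[x_k,z]=k\,D_k(z)$; that is, $D_k={\rm ad}(\frac1k x_k)$. (In particular $D_k(C)=0$, as forced by centrality.) Thus every nonzero-degree homogeneous part is inner, and this already covers all odd superderivations when $\epsilon=\frac12$, where odd degrees are never zero.

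The degree-zero part $D_0$ must be treated directly, as the trick above degenerates at $k=0$. Here $D_0$ preserves each graded line, so $D_0(L_m)=\mu_m L_m$, $D_0(G_r)=\nu_r G_r$, $D_0(L_0)=aL_0+bC$ and $D_0(C)=\gamma C$. Evaluating $D_0$ on $[L_0,L_m]$ forces $a=0$, and evaluating it on $[L_m,L_n]$ with $m+n\neq0$ yields the additivity $\mu_{m+n}=\mu_m+\mu_n$, whence $\mu_m=cm$ for a constant $c$. After subtracting $-c\,{\rm ad}(L_0)$ I may assume $\mu_m=0$; feeding this into $[G_r,G_s]=2L_{r+s}+\frac13\delta_{r+s,0}(r^2-\frac14)C$ then forces $D_0(G_r)=0$ from the cases $r+s\neq0$, while reading the cases $r+s=0$ for two values of $r$ with distinct $r^2-\frac14$ forces $b=\gamma=0$. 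Hence $D_0=-c\,{\rm ad}(L_0)$ is inner. In the Ramond case there is in addition a possible odd degree-zero part, which an entirely analogous structure-constant chase identifies with a multiple of ${\rm ad}(G_0)$.

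The step I expect to be the real obstacle is assembling the conclusion, since an a priori infinite sum $\sum_k {\rm ad}(\frac1k x_k)$ need not equal ${\rm ad}$ of any single element of $L$. The decisive point is that $x_k=D_k(L_0)=\pi_k(D(L_0))$ is merely the degree-$k$ component of the one element $D(L_0)\in L$, so $x_k=0$ for all but finitely many $k$; consequently only finitely many $D_k$ are nonzero and $D$ is a finite sum of inner superderivations. Setting $x:=-cL_0+\sum_{k\neq0}\frac1k x_k\in L$ (adjusted by the $G_0$-term in the Ramond case) gives $D={\rm ad}(x)$, which proves ${\rm Der}(L)\subseteq{\rm IDer}(L)$; the reverse inclusion is immediate.
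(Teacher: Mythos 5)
The paper does not actually prove this lemma: it is quoted verbatim from \cite{GMP}, where it is obtained as a consequence of the computation of the first cohomology of the $N=1$ super Virasoro algebras with coefficients in the adjoint module. So there is no in-paper argument to compare against; what you have written is a self-contained proof, and it is correct. Your route is the standard one for graded Lie (super)algebras of this type, and it is essentially the same machinery that \cite{GMP} relies on: decompose a homogeneous superderivation $D$ into components $D_k$ with respect to the $\mathrm{ad}(L_0)$-eigenspace grading; for $k\neq 0$ the identity $[x_k,z]=k\,D_k(z)$ with $x_k=D_k(L_0)$ shows $D_k=\mathrm{ad}(\tfrac1k x_k)$ is inner; the degree-zero part is killed by a direct structure-constant computation; and the finiteness of the sum is extracted from the fact that $x_k=\pi_k(D(L_0))$ vanishes for all but finitely many $k$. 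That last observation is the right way to dispose of the only genuinely delicate point (a priori infinite sums of inner derivations), and your verifications of the degree-zero case check out: $a=0$ from $[L_0,L_m]$, $\mu_m=cm$ from additivity, $\nu_r\equiv 0$ from the relations $[G_r,G_s]=2L_{r+s}$ with $r+s\neq 0$, and $b=\gamma=0$ from the central terms. The two places you compress --- the passage from $\mu_{m+n}=\mu_m+\mu_n$ (valid only for $m\neq n$, $m+n\neq 0$) to $\mu_m=cm$, and the odd degree-zero component in the Ramond case, which requires its own finite chase through $[L_m,G_r]$ and $[G_r,G_s]$ to identify it with a multiple of $\mathrm{ad}(G_0)$ --- are both routine and lead to the stated conclusion, so I would only ask that they be written out if this were presented as a full replacement for the citation.
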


The even part of the super Virasoro algebra is the Virasoro algebra. In \cite {AY} and \cite{ZCZ}, 2-local derivations over the Virasoro algebra are determined.

\begin{theo} \cite{AY, ZCZ}
Every 2-local derivation on  the Virasoro algebra  is a derivation.
\end{theo}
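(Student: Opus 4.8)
The plan is to leverage the classical fact that every derivation of the Virasoro algebra $\mathrm{Vir}$ is inner (the analogue for $\mathrm{SVir}[\epsilon]$ is recorded in Lemma \ref{lem2.4}), so that a $2$-local derivation $\Delta$ is, \emph{pairwise}, realised by $\mathrm{ad}$ of some element of $\mathrm{Vir}$. The entire argument then reduces to identifying $\Delta$ with a single inner derivation, and the crux is to extract one global scalar from a family of pointwise scalars. First I would normalise: applying the $2$-local property to the pair $(L_0,L_0)$ gives $a\in\mathrm{Vir}$ with $\mathrm{ad}(a)(L_0)=\Delta(L_0)$, so after replacing $\Delta$ by the $2$-local derivation $\Delta'=\Delta-\mathrm{ad}(a)$ I may assume $\Delta'(L_0)=0$.

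Next, for an arbitrary $x$ I apply the $2$-local property to the pair $(L_0,x)$: there is $a_x$ with $[a_x,L_0]=\Delta'(L_0)=0$ and $[a_x,x]=\Delta'(x)$. Since $[L_0,L_n]=-nL_n\neq0$ for $n\neq0$, the centraliser of $L_0$ in $\mathrm{Vir}$ is $\mathbb{C}L_0\oplus\mathbb{C}C$, whence $a_x=\lambda_x L_0+(\text{central})$ and therefore $\Delta'(x)=\lambda_x[L_0,x]$ for a scalar $\lambda_x$ depending on $x$. In particular $\Delta'(C)=0$ and $\Delta'(L_n)\in\mathbb{C}L_n$ for every $n$. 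At this point $\Delta'$ agrees with $\lambda_x\,\mathrm{ad}(L_0)$ at each point, but with a priori varying $\lambda_x$.

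The main step is to show $\lambda_x$ is independent of $x$; this is where I expect the real difficulty, precisely because $\Delta$ is not assumed linear, so ``checking on a basis'' is not available. The device I would use is separation of $L_0$-degrees. Given $x$ with finite noncentral support, choose an index $N$ large enough that the support of $[L_N,x]$ lies strictly above that of $[L_0,x]$, and apply the $2$-local property to the pair $(x,L_N)$: one element $b$ must satisfy $[b,L_N]=\Delta'(L_N)=-N\lambda_{L_N}L_N$ and $[b,x]=\Delta'(x)$. The first relation forces $b=\lambda_{L_N}L_0+\mu L_N+(\text{central})$. Then $[b,x]=\lambda_{L_N}[L_0,x]+\mu[L_N,x]$, where the two summands have disjoint supports; since $\Delta'(x)=\lambda_x[L_0,x]$ is supported low, the high part $\mu[L_N,x]$ must vanish, and as $[L_N,x]\neq0$ this gives $\mu=0$ and hence $\lambda_x=\lambda_{L_N}$. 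Running this with two large indices shows $\lambda_{L_N}$ is independent of the large $N$, and comparing any two elements through a commonly large $N$ yields a single constant $\lambda$ with $\Delta'(x)=\lambda[L_0,x]$ for all $x$.

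Finally, $\Delta'=\lambda\,\mathrm{ad}(L_0)$ is a genuine (inner) derivation, so $\Delta=\mathrm{ad}(a+\lambda L_0)$ is a derivation, completing the proof. I expect the normalisation and the centraliser computation to be routine; the substantive point is the degree-separation argument that upgrades the pointwise scalars $\lambda_x$ to one global $\lambda$, and I would take care there over elements with both large positive and large negative support, choosing $N$ to exceed the full diameter of the support so that the high- and low-degree parts genuinely separate.
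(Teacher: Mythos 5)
Your proof is correct, but note that the paper does not actually prove this statement itself: it is quoted from the cited references, and the paper's own arguments (Theorem \ref{thm1} and Lemmas \ref{lem2}--\ref{lem3}, which adapt that template to the super case) are organized differently from what you propose. The paper's scheme normalizes $\Delta$ at \emph{two} elements at once (there $G_0$ and $G_1$; for the Virasoro algebra one would use two elements such as $L_0$ and $L_1$ whose ``centralizer constraints'' intersect trivially), after which the normalized $2$-local derivation is shown to vanish on every basis element and then on every element, with no residual scalar to worry about. You instead normalize at the single element $L_0$, which leaves the nontrivial centralizer $\mathbb{C}L_0\oplus\mathbb{C}C$ in play, so that $\Delta'(x)=\lambda_x[L_0,x]$ with an a priori $x$-dependent scalar; your degree-separation argument with a large $L_N$ (pairing $(x,L_N)$, computing the centralizer-type constraint $[b,L_N]=-N\lambda_{L_N}L_N$ to force $b=\lambda_{L_N}L_0+\mu L_N+(\mathrm{central})$, and separating supports to kill $\mu$ and identify $\lambda_x=\lambda_{L_N}$) is exactly what is needed to globalize the scalar, and it is carried out correctly, including the caveat about choosing $N$ beyond the diameter of the support of $x$. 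The trade-off is clear: the two-point normalization of the paper eliminates the scalar-globalization step at the cost of a second evaluation point and a slightly longer case analysis on basis elements, while your one-point normalization is leaner up front but must pay for it with the support-separation argument. Both routes rest on the same two pillars, namely that every derivation of the (super) Virasoro algebra is inner and that centralizers of well-chosen homogeneous elements are small.
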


Based on the above methods, we do such researches for the super Virasoro algebra and some related Lie superalgebras in this paper.

\section{ 2-Local superderivations on  the super Virasoro algebra ${\rm SVir}[\epsilon]$}

Now we shall give the main result concerning 2-local superderivations on the super Virasoro  algebra ${\rm SVir}[0]$.

\begin{lemm} Let  $\Delta$ be a 2-local  superderivation  on the super Virasoro algebra ${\rm SVir}[0]$.  Then for
any $x,y\in {\rm SVir}[0] $, there exists a  superderivation   $D_{x,y}$  of  ${\rm SVir}[0]$ such that $D_{x,y}(x)=\Delta(x)$, $D_{x,y}(y)=\Delta(y)$ and it can be written as
\begin{equation}\label{3.1} D_{x,y}={\rm ad}(\sum_{k\in\mathbb{Z}}(a_{k}(x,y)L_{k}+b_{k}(x,y)G_{k})),\end{equation}
  where $a_{k},b_{k}(k\in\mathbb{Z})$ are complex-valued functions on ${\rm SVir}[0]\times {\rm SVir}[0]$.
\end{lemm}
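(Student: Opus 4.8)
The plan is to obtain this statement as an essentially immediate consequence of Lemma \ref{lem2.4}, which guarantees that every superderivation of ${\rm SVir}[0]$ is inner. The role of the lemma is one of normalization: it pins down a convenient explicit form for the superderivations $D_{x,y}$ furnished by the definition of a 2-local superderivation, so that the later arguments can proceed by comparing coefficients of the basis elements $L_k$ and $G_k$. Accordingly, I would treat the present statement not as a theorem with a hard core but as a reduction step.

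First I would fix an arbitrary pair $x,y\in{\rm SVir}[0]$. By the definition of a 2-local superderivation recalled in Section 2, there exists a superderivation $D_{x,y}$ of ${\rm SVir}[0]$ satisfying $D_{x,y}(x)=\Delta(x)$ and $D_{x,y}(y)=\Delta(y)$; the content here concerns only the shape of this $D_{x,y}$, not its existence. Since ${\rm Der}(L)={\rm Der}_{\overline{0}}(L)\oplus{\rm Der}_{\overline{1}}(L)$, the superderivation $D_{x,y}$ is the sum of its even and odd homogeneous components, and applying Lemma \ref{lem2.4}, which asserts ${\rm Der}({\rm SVir}[0])={\rm IDer}({\rm SVir}[0])$, to each component yields an element $u\in{\rm SVir}[0]$ with $D_{x,y}={\rm ad}(u)$.

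Because $\epsilon=0$ forces $r\in\mathbb{Z}$, the algebra ${\rm SVir}[0]$ is spanned by $\{L_k,G_k,C\mid k\in\mathbb{Z}\}$, with the $L_k$ and $C$ spanning the even part and the $G_k$ spanning the odd part. Hence I may write $u=\sum_{k\in\mathbb{Z}}(a_kL_k+b_kG_k)+cC$, a finite linear combination, for suitable scalars $a_k,b_k,c$. The one genuinely operative observation is that $C$ is central, so ${\rm ad}(C)=0$ and the central summand contributes nothing; discarding it gives $D_{x,y}={\rm ad}(\sum_{k\in\mathbb{Z}}(a_kL_k+b_kG_k))$, which is precisely \eqref{3.1}. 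Since the element $u$, and therefore each scalar $a_k,b_k$, is extracted from the chosen pair $(x,y)$, these scalars are functions on ${\rm SVir}[0]\times{\rm SVir}[0]$, justifying the notation $a_k(x,y),b_k(x,y)$.

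I do not expect any real obstacle in this step. The only point requiring care is the ambiguity of $u$: the representation $D_{x,y}={\rm ad}(u)$ is unique only up to adding multiples of $C$, so the choice of the scalar $c$ is immaterial and is simply dropped, while the functions $a_k,b_k$ are determined by $D_{x,y}$ (equivalently, by its action on the non-central generators). Any substantive difficulty in the overall program is postponed to the subsequent lemmas, where one must show that these coefficient functions do not in fact depend on $(x,y)$ and that they assemble into a single global inner superderivation realizing $\Delta$.
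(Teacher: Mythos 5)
Your proposal is correct and follows the same route as the paper, which simply cites Lemma \ref{lem2.4} (${\rm Der(SVir[\epsilon])}={\rm IDer(SVir[\epsilon])}$) and declares the form \eqref{3.1} obvious; you merely spell out the details the paper leaves implicit, namely expanding the inner element in the basis and discarding the central term since ${\rm ad}(C)=0$.
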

\begin{proof}By Lemma  \ref{lem2.4},  the superderivation $D_{x,y}$ can obviously be written as the form of $(3.1)$.
\end{proof}

\begin{theo}\label{thm1} Every 2-local superderivation on  the super Virasoro algebra  ${\rm SVir}[0]$ is a superderivation.
\end{theo}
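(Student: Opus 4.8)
The plan is to show that $\Delta$ coincides with one fixed inner superderivation; by Lemma~\ref{lem2.4} this is the same as proving $\Delta$ is a superderivation. The whole strategy rests on exploiting the action at $L_0$ to force the auxiliary elements in \eqref{3.1} into an extremely rigid shape. First I would normalize at $L_0$. A direct computation gives $[\,\sum_k(a_kL_k+b_kG_k),\,L_0\,]=\sum_k k(a_kL_k+b_kG_k)$, so $\Delta(L_0)$ lies in the span of $\{L_k,G_k:k\neq 0\}$; hence there is a fixed $h_0$ with $[h_0,L_0]=\Delta(L_0)$. Replacing $\Delta$ by $\Delta-{\rm ad}(h_0)$ (again a $2$-local superderivation) I may assume $\Delta(L_0)=0$. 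Then for every $x$ the identity $[h_{L_0,x},L_0]=0$ forces $h_{L_0,x}=a(x)L_0+b(x)G_0$, so that $\Delta(x)=[a(x)L_0+b(x)G_0,\,x]$ for suitable scalars $a(x),b(x)$.

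Next I would make the coefficients universal on the even generators. The previous line yields $\Delta(L_n)=-n\,a(L_n)L_n-\tfrac n2 b(L_n)G_n$. Feeding the pair $(L_1,L_n)$ into the form \eqref{3.1}, the requirement $[h_{L_1,L_n},L_1]=\Delta(L_1)$ pins $h_{L_1,L_n}=a(L_1)L_0+p\,L_1+b(L_1)G_0$, and then matching $[h_{L_1,L_n},L_n]=\Delta(L_n)$ forces $a(L_n)=a(L_1)=:\alpha$ and $b(L_n)=b(L_1)=:\beta$ for all $n\neq 0$ (the spurious $L_1$-term drops out because its coefficient is $p(1-n)$ and $n\neq 1$). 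Thus $\Delta(L_n)=[\alpha L_0+\beta G_0,\,L_n]$ for every $n$; that is, $\Delta$ agrees on each $L_n$ with the single fixed inner superderivation $D:={\rm ad}(\alpha L_0+\beta G_0)$.

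The final and most delicate step is to promote this agreement from the vectors $L_n$ to an \emph{arbitrary} element $x$, since $\Delta$ is not assumed additive. For this I would feed the pair $(x,L_n)$ with $n$ odd into \eqref{3.1}. The constraint $[h_{x,L_n},L_n]=\Delta(L_n)=D(L_n)$ determines $h_{x,L_n}=\alpha L_0+\beta G_0+p_n L_n$; taking $n$ odd is precisely what removes the residual freedom in the $G$-direction, because $k-\tfrac n2\neq 0$ for all integers $k$. Hence $\Delta(x)=[h_{x,L_n},x]=D(x)+p_n[L_n,x]$. Now $\Delta(x)-D(x)$ is one fixed element of ${\rm SVir}[0]$, so it has bounded degree, whereas for non-central $x$ the bracket $[L_n,x]$ is nonzero and supported in arbitrarily high degree as $n\to\infty$; comparing supports forces $p_n=0$ for all large odd $n$, so $\Delta(x)=D(x)$. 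Central elements follow at once from $\Delta(C)=0$, and note that this single argument already covers the odd generators $G_r$ and $G_0$, so no separate computation for them is needed.

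Putting the pieces together gives $\Delta=D$ after the normalization, whence the original $\Delta={\rm ad}(h_0+\alpha L_0+\beta G_0)$ is inner and therefore a superderivation. I expect the main obstacle to be exactly this last passage from generators to all elements: the degree/support comparison, combined with the parity bookkeeping that lets the odd index $n$ kill the unwanted $G$-freedom, is the crux, while everything before it is essentially a chain of bracket computations constrained by \eqref{3.1}.
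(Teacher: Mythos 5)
Your proof is correct, and it follows the same overall template as the paper's --- subtract a well-chosen inner superderivation, then propagate agreement from a few anchor elements to all of ${\rm SVir}[0]$ using the $\mathbb Z$-grading and a large-index support comparison --- but your anchors and intermediate reductions are genuinely different. The paper anchors at the \emph{odd} generators $G_0,G_1$: since $[h,G_i]=\Delta(G_i)=0$ forces $h\in\langle L_{2i},C\rangle$, subtracting $D_{G_0,G_1}$ and intersecting the two resulting one-parameter constraints kills the modified $\Delta$ on every $G_i$ and $L_i$ simultaneously, after which a ``take $i$ large'' argument finishes for general $x$. You instead anchor at the \emph{even} elements $L_0$ and $L_1$: the centralizer of $L_0$ is the full degree-zero part $\langle L_0,G_0,C\rangle$, so you need the extra pass with the pair $(L_1,L_n)$ to make the coefficients $\alpha,\beta$ universal, and the parity trick (odd $n$, so $k-\tfrac n2\neq 0$ for all integers $k$) to remove the residual $G$-direction freedom in the final pair $(L_n,x)$. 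I checked the bracket computations ($[h,L_0]=\sum_k k(a_kL_k+b_kG_k)$, the determination $h_{L_1,L_n}=\alpha L_0+pL_1+\beta G_0$ with $p(1-n)=0$, and $h_{x,L_n}=\alpha L_0+\beta G_0+p_nL_n$) and the concluding support argument (for non-central $x$ and large odd $n$, $[L_n,x]\neq0$ and lives in degrees beyond those of the fixed element $\Delta(x)-D(x)$, forcing $p_n=0$); all are sound, and the central case is disposed of correctly since every superderivation annihilates $C$. Your route costs a little more bookkeeping than the paper's (odd anchors annihilate all $G$-directions for free), but it exhibits the limiting derivation explicitly as ${\rm ad}(h_0+\alpha L_0+\beta G_0)$ and never uses odd generators as test elements; both proofs ultimately rest on the same two pillars, namely Lemma~\ref{lem2.4} and the unbounded-support comparison for large indices.
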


To prove Theorem 3.2, we need several lemmas.

\begin{lemm}Let $\Delta$ be a 2-local  superderivation on ${\rm SVir}[0]$.  For a given $i\in\mathbb{Z}$, if  $\Delta(G_{i})=0$ then
\begin{equation}\label{1}D_{G_{i},y}={\rm ad}(a_{2i}(G_{i},y)L_{2i}),  \qquad \forall  y\in {\rm SVir}[0],
\end{equation}
where $a_{2i}$ is a complex-valued function on ${\rm SVir}[0]\times {\rm SVir}[0]$.
\end{lemm}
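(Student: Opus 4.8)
The plan is to start from the general shape of $D_{G_i,y}$ furnished by the preceding lemma and then exploit the single constraint $D_{G_i,y}(G_i)=\Delta(G_i)=0$ coming from the hypothesis. Writing $a_k=a_k(G_i,y)$ and $b_k=b_k(G_i,y)$ for brevity, we have
\[
D_{G_i,y}={\rm ad}\Big(\sum_{k\in\mathbb{Z}}(a_k L_k+b_k G_k)\Big),
\]
so the whole task reduces to pinning down which coefficients $a_k,b_k$ are forced to vanish. First I would compute $D_{G_i,y}(G_i)$ explicitly from the bracket relations of ${\rm SVir}[0]$, namely $[L_k,G_i]=(\tfrac{k}{2}-i)G_{k+i}$ and $[G_k,G_i]=2L_{k+i}+\tfrac13\delta_{k+i,0}(k^2-\tfrac14)C$, which gives
\[
0=D_{G_i,y}(G_i)=\sum_{k}a_k\Big(\tfrac{k}{2}-i\Big)G_{k+i}+2\sum_{k}b_k L_{k+i}+\tfrac13 b_{-i}\Big(i^2-\tfrac14\Big)C.
\]

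The next step is a coefficient comparison against the basis $\{L_m,G_m,C\}$, using their linear independence. Matching the $G_m$-terms (with $m=k+i$) yields $a_k(\tfrac{k}{2}-i)=0$ for every $k$; since $\tfrac{k}{2}-i$ vanishes precisely when $k=2i$, this forces $a_k=0$ for all $k\neq 2i$, leaving only $a_{2i}$ free. Matching the $L_m$-terms yields $2b_k=0$, hence $b_k=0$ for all $k$, and the central term then vanishes automatically. Substituting back leaves $D_{G_i,y}={\rm ad}(a_{2i}L_{2i})$, which is exactly the asserted form \eqref{1}.

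I do not expect a serious obstacle: the argument is a direct coefficient comparison, and the only points needing care are structural observations about where the coefficients can be nonzero. Specifically, the odd part $\sum_k b_k G_k$ of the generating element is annihilated \emph{entirely}, because the factor $2$ in $[G_k,G_i]$ never vanishes, whereas in the even part $\sum_k a_k L_k$ exactly one coefficient, $a_{2i}$, can survive, since the linear equation $\tfrac{k}{2}-i=0$ has the unique integer solution $k=2i$. One should also note that the central element contributes nothing once all $b_k$ are shown to be zero, so no separate analysis of the cocycle term is required.
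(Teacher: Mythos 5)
Your proposal is correct and follows exactly the paper's own argument: substitute the general form from Lemma 3.1 into the constraint $D_{G_i,y}(G_i)=\Delta(G_i)=0$, expand using the bracket relations, and compare coefficients to conclude $a_k=0$ for $k\neq 2i$ and $b_k=0$ for all $k$. No gaps; the observation that the central term vanishes once all $b_k=0$ matches the paper's (implicit) treatment.
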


\begin{proof} By Lemma 3.1, we can assume that
\begin{equation}\label{tt}
D_{G_{i},y}={\rm ad}(\sum_{k\in\mathbb{Z}}(a_{k}(G_{i},y)L_{k}+b_{k}(G_{i},y)G_{k}))
\end{equation}
 for some  complex-valued functions $a_{k},b_{k}(k\in\mathbb{Z})$  on ${\rm SVir}[0]\times {\rm SVir}[0]$.

When $\Delta(G_{i})=0$, in view of \eqref{tt} we obtain
\begin{eqnarray*}
0&=&\Delta(G_{i})
=D_{G_{i},y}(G_{i})
\\
&=&[\sum_{k\in\mathbb{Z}}(a_{k}(G_{i},y)L_{k}+b_{k}(G_{i},y)G_{k}),G_{i}]
\\
&=& \sum_{k\in\mathbb{Z}}((\frac{k}{2}-i)a_{k}(G_{i},y)G_{k+i}+2b_{k}(G_{i},y)L_{k+i}
+\frac{1}{3}\delta_{k+i, 0}(k^{2}-\frac{1}{4})b_{k}(G_{i},y)C).
\end{eqnarray*}
Then we have $(\frac{k}{2}-i)a_{k}(G_{i},y)=b_{k}(G_{i},y)=0$ for all $k\in\mathbb{Z}$, which deduces
$a_{k}(G_{i},y)=0$ with $k\neq2i$ and $b_{k}(G_{i},y)=0$ for all $ k\in\mathbb{Z}$. This with (3.3) implies that (3.2) holds. The proof is completed.
\end{proof}
\begin{lemm}  \label{lem2}  Let $\Delta$ be a $2$-local superderivation on ${\rm SVir}[0]$ such that $\Delta( G_{0})=\Delta( G_{1})=0$. Then
$$\Delta(G_{i})=\Delta(L_{i})=0,  \  \forall i\in\mathbb{Z}.$$
\end{lemm}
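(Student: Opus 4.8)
The plan is to feed the two hypotheses $\Delta(G_0)=0$ and $\Delta(G_1)=0$ into the previous lemma (Lemma 3.3) and play the resulting two families of derivations against each other. Since $\Delta(G_0)=0$, taking $i=0$ there gives $D_{G_0,y}={\rm ad}(a_0(G_0,y)L_0)$ for every $y$; since $\Delta(G_1)=0$, taking $i=1$ gives $D_{G_1,y}={\rm ad}(a_2(G_1,y)L_2)$ for every $y$. The point is that ${\rm ad}(L_0)$ acts diagonally on the standard basis while ${\rm ad}(L_2)$ shifts indices up by $2$, so evaluating $\Delta$ on a single basis element through these two derivations places $\Delta(z)$ inside two subspaces whose intersection is trivial.

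Concretely, I would compute $\Delta(z)$ for each basis element $z$ in two ways, using the defining property $\Delta(z)=D_{G_0,z}(z)=D_{G_1,z}(z)$. For $z=L_n$ the first evaluation gives $\Delta(L_n)=a_0(G_0,L_n)[L_0,L_n]=-n\,a_0(G_0,L_n)L_n\in\mathbb{C}L_n$, while the second gives $\Delta(L_n)=a_2(G_1,L_n)[L_2,L_n]=(2-n)a_2(G_1,L_n)L_{n+2}$ (up to a multiple of $C$ when $n=-2$), which lies in $\mathbb{C}L_{n+2}\oplus\mathbb{C}C$. As $\mathbb{C}L_n\cap(\mathbb{C}L_{n+2}\oplus\mathbb{C}C)=0$ for every $n$, we conclude $\Delta(L_n)=0$; the case $n=0$ is immediate since $[L_0,L_0]=0$. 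For $z=G_r$ the same idea applies: the first evaluation forces $\Delta(G_r)\in\mathbb{C}G_r$ via $[L_0,G_r]=-rG_r$, and the second forces $\Delta(G_r)\in\mathbb{C}G_{r+2}$ via $[L_2,G_r]=(1-r)G_{r+2}$; since $G_r$ and $G_{r+2}$ are linearly independent, $\Delta(G_r)=0$.

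I do not expect a serious obstacle; the argument is essentially linear-independence bookkeeping once the two anchors $G_0,G_1$ are fixed. The only point demanding a little care is the appearance of the central element in the bracket $[L_2,L_{-2}]=4L_0+\tfrac12 C$, so that for $n=-2$ the second evaluation a priori lands in $\mathbb{C}L_0\oplus\mathbb{C}C$ rather than in a single $\mathbb{C}L_{n+2}$; this is harmless because $L_{-2}$ is still independent of both $L_0$ and $C$. The choice of $G_1$ (rather than, say, $G_{-1}$) as the second anchor is what guarantees a nonzero, index-increasing shift that never collides with the diagonal action of $L_0$, and this is the crux that makes the two-subspace intersection trivial.
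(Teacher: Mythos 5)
Your argument is correct and is essentially identical to the paper's own proof: both feed the hypotheses into Lemma 3.3 to get $D_{G_0,y}={\rm ad}(a_0L_0)$ and $D_{G_1,y}={\rm ad}(a_2L_2)$, then evaluate $\Delta$ on each basis element $L_n$, $G_r$ through both derivations and use linear independence of $L_n$, $L_{n+2}$, $C$ (resp.\ $G_r$, $G_{r+2}$) to force $\Delta$ to vanish. The paper phrases this as equating the two expressions and killing the coefficient functions, while you phrase it as a trivial intersection of two subspaces, but the content is the same, including the remark about the central term at $n=-2$.
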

\begin{proof} Since $\Delta( G_{0})=\Delta( G_{1})=0$, by using Lemma 3.3, for any $y \in {\rm SVir}[0]$, we can assume that
\begin{equation}\label{01}D_{G_{0},y}={\rm ad}(a_{0}(G_0,y)L_{0}),\end{equation}
\begin{equation}\label{011}D_{G_{1},y}={\rm ad}(a_{2}(G_{1},y)L_{2}),\end{equation}
  where   $a_{0},a_{2}$  are complex-valued functions on ${\rm SVir}[0]\times {\rm SVir}[0]$.
Let $i\in\mathbb{Z}$ be a fixed index. Taking $y=G_{i}$ in \eqref{01} and \eqref{011} respectively,   we get
\begin{equation*}
\Delta(G_{i})=D_{G_{0},G_{i}}(G_{i})=[a_{0}(G_{0},G_{i})L_{0},G_{i}]=-ia_{0}(G_{0},G_{i})G_{i},
\end{equation*}
\begin{equation*}
\Delta(G_{i})=D_{G_{1},G_{i}}(G_{i})=[a_{2}(G_{1},G_{i})L_{2},G_{i}]=(1-i)a_{2}(G_{1},G_{i})G_{i+2}.
\end{equation*}
By the above two equations, we have
$$ ia_{0}(G_{0},G_{i})G_{i}+(1-i)a_{2}(G_{1},G_{i})G_{i+2}=0,$$
which implies $a_{0}(G_{0},G_{i})=0$ with $i\neq 0$ and $a_{2}(G_{1},G_{i})=0$ with $i\neq 1$.  It concludes that  $\Delta(G_{i})=0$.

Similarly, setting $y=L_{i}$ in \eqref{01} and \eqref{011} respectively,  we get
\begin{equation*}
\Delta(L_{i})=D_{G_{0},L_{i}}(L_{i})=[a_{0}(G_{0},L_{i})L_{0},L_{i}]=-ia_{0}(G_{0},L_{i})L_{i},
\end{equation*}
\begin{equation*}
\Delta(L_{i})=D_{G_{1},L_{i}}(L_{i})=[a_{2}(G_{1},L_{i})L_{2},L_{i}]
=a_2(G_{1},L_{i})((2-i)L_{i+2}+\frac12\delta_{i+2, 0}C).
\end{equation*}
By the above two equations, it follows that
$$ ia_{0}(G_{0},L_{i})L_{i}+a_2(G_{1},L_{i})((2-i)L_{i+2}+\frac12\delta_{i+2, 0}C)=0,$$
which implies $a_{0}(G_{0},L_{i})=0$ with $i\neq 0$ and $a_{2}(G_{1},L_{i})=0$ with $i\neq 2$.  It concludes that  $\Delta(L_{i})=0$.
 The proof is finished.
\end{proof}

\begin{lemm}\label{lem3}
Let $\Delta$ be a 2-local superderivation on ${\rm SVir}[0]$ such that $\Delta(G_{i})=0$ for all $i\in \mathbb{Z}$. Then for any $x=\sum_{t\in\mathbb{Z}}(\alpha_{t}L_{t}+\beta_{t}G_{t})+aC \in {\rm SVir}[0]$, where $\alpha_{t},\beta_t, a\in\mathbb{C}$, we have
\begin{equation*}%\label{1}
\Delta(x)=0.
\end{equation*}
\end{lemm}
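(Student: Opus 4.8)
The plan is to exploit the 2-local property by pairing an arbitrary element $x$ with the odd generators $G_i$ (for which $\Delta(G_i)=0$ is assumed), reduce the accompanying superderivations to a very rigid form via the computation in Lemma 3.3, and then extract $\Delta(x)=0$ from a degree/support argument.

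First I would fix $x=\sum_{t\in\mathbb{Z}}(\alpha_t L_t+\beta_t G_t)+aC$, keeping in mind that this is a \emph{finite} sum. For each $i\in\mathbb{Z}$, apply the 2-local property to the pair $\{x,G_i\}$: there is a superderivation $D_{x,G_i}$ with $D_{x,G_i}(x)=\Delta(x)$ and $D_{x,G_i}(G_i)=\Delta(G_i)=0$. Since $D_{x,G_i}(G_i)=0$, the computation carried out in the proof of Lemma 3.3 applies verbatim (it uses only that the superderivation kills $G_i$), and forces $D_{x,G_i}={\rm ad}(c_i L_{2i})$ for some scalar $c_i=c_i(x)\in\mathbb{C}$. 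Evaluating this superderivation at $x$ gives the key identity
\[
\Delta(x) = [\,c_i L_{2i},\, x\,] = c_i\,[L_{2i}, x], \qquad i \in \mathbb{Z}.
\]

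Next I would run the degree argument on this family of identities. If $x\in\mathbb{C}C$, then $[L_{2i},x]=0$ and the identity gives $\Delta(x)=0$ immediately. Otherwise let $N$ be the largest integer with $(\alpha_N,\beta_N)\ne(0,0)$. Using the bracket relations of ${\rm SVir}[0]$, for all but finitely many $i$ the element $[L_{2i},x]$ has a genuinely nonzero component at the top index $2i+N$, coming from $\alpha_N(2i-N)L_{2i+N}$ or $\beta_N(i-N)G_{2i+N}$; note these two candidate contributions lie in the even and odd parts respectively and so cannot cancel each other, and the central term (which can only occur at the single index $t=-2i$) never reaches this top level for large $i$. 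Now suppose $\Delta(x)\ne 0$. Then the identity forces $c_i\ne 0$ for every $i$, so $\Delta(x)=c_i[L_{2i},x]$ carries a nonzero component at index $2i+N$ for each large $i$. Letting $i$ range over all large integers produces infinitely many distinct indices inside the support of the single vector $\Delta(x)$, contradicting the finiteness of that support. Hence $\Delta(x)=0$.

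The structural reduction of $D_{x,G_i}$ to ${\rm ad}(c_i L_{2i})$ is essentially free once Lemma 3.3 is in hand, so the crux of the proof is the degree bookkeeping: one must verify that ${\rm ad}(L_{2i})$ shifts the support of $x$ by exactly $2i$, that the leading coefficient $(2i-N)$ or $(i-N)$ is nonzero for all large $i$, and that no cancellation (including from the central element) can destroy the top index. I expect the only mild care needed is isolating the purely central case $x\in\mathbb{C}C$ and bookkeeping the finitely many exceptional values of $i$ where the leading coefficient could vanish.
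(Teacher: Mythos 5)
Your proposal is correct and follows essentially the same route as the paper: invoke Lemma 3.3 with $y=x$ to get $D_{G_i,x}=\mathrm{ad}(a_{2i}(G_i,x)L_{2i})$, so that $\Delta(x)=a_{2i}(G_i,x)[L_{2i},x]$ for every $i$, and then vary $i$. Your leading-index/finite-support argument is simply a careful spelling-out of the paper's terse closing line ``by taking enough different $i$ \dots we obtain $\Delta(x)=0$,'' so no further changes are needed.
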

\begin{proof}
For any $x=\sum_{t\in\mathbb{Z}}(\alpha_{t}L_{t}+\beta_{t}G_{t}) +aC\in {\rm SVir}[0]$, where $\alpha_{t},\beta_t, a\in\mathbb{C}$, since $\Delta(G_{i})=0$ for any $i\in\mathbb{Z}$, from Lemma 3.3 we have
\begin{eqnarray*}
\Delta(x)&=&D_{G_{i},x}(x)=[a_{2i}(G_{i},x)L_{2i},x]\\
  &=& \sum_{t\in\mathbb{Z}}(\alpha_{t}a_{2i}(G_{i},x)((2i-t)L_{t+2i}+\frac1{12}\delta_{2i+t, 0}(8i^3-2i)C)\\
  &&+(i-t)\beta_{t}a_{2i}(G_{i},x)G_{t+2i}).
\end{eqnarray*}
By taking enough diffident $i\in\mathbb{Z}$ in the above equation and, if necessary, let these $ i^{,} $s  be large enough, we obtain that $\Delta(x)=0$.
\end{proof}

Now we are to prove Theorem \ref{thm1}.

\textit{Proof of Theorem} \ref{thm1} : Let $\Delta$ be a 2-local superderivation on ${\rm SVir}[0]$. Take a derivation $D_{G_0,G_1}$ such that
\begin{equation*}
\Delta(G_0)=D_{G_0,G_1}(G_0), \ \Delta(G_1)=D_{G_0,G_1}(G_1).
\end{equation*}
Set $\Delta_1=\Delta-D_{G_0,G_1}$.  Then $\Delta_1$ is a 2-local
superderivation such that $\Delta_1(G_0)=\Delta_1(G_1)=0$.   By Lemma
\ref{lem2}, $\Delta_1(G_i)=0$ for all $i\in\mathbb{Z}$.  It
follows that $\Delta_1=0$.    Thus $\Delta=D_{G_0,G_1}$  is a
superderivation. The proof is completed.
\hfill$\Box$

Using the similar method of proving Theorem \ref{thm1}, we can obtain the same result for the  super Virasoro algebra  ${\rm SVir}[\frac{1}{2}]$, and then
we get the main result of this section.

\begin{theo} Every 2-local superderivation on  the super Virasoro algebra  ${\rm SVir}[\epsilon]$ is a superderivation.
\end{theo}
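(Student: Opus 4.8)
The plan is to run the $\epsilon=0$ argument (Theorem \ref{thm1} and its supporting Lemmas 3.1, 3.3, \ref{lem2}, \ref{lem3}) almost verbatim for ${\rm SVir}[\tfrac12]$; the only structural difference is that the odd generators $G_r$ are now indexed by $r\in\mathbb Z+\tfrac12$, so there is no $G_0$ and the seed pair $G_0,G_1$ must be replaced by a pair of half-integer-indexed odd elements. First I would record the analogue of Lemma 3.1: since ${\rm Der}={\rm IDer}$ holds for every $\epsilon$ by Lemma \ref{lem2.4}, every superderivation realizing $\Delta$ on a given pair is ${\rm ad}$ of some $\sum_{m\in\mathbb Z}a_mL_m+\sum_{r\in\mathbb Z+1/2}b_rG_r$. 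Then I would reprove the analogue of Lemma 3.3: if $\Delta(G_i)=0$ for some $i\in\mathbb Z+\tfrac12$, expanding
\begin{equation*}
0=\Bigl[\sum_{m}a_mL_m+\sum_{r}b_rG_r,\,G_i\Bigr]=\sum_{m}\bigl(\tfrac m2-i\bigr)a_mG_{m+i}+\sum_{r}b_r\bigl(2L_{r+i}+\cdots\bigr)
\end{equation*}
and using that the $L$-components (with $r+i\in\mathbb Z$) and the $G$-components (with $m+i\in\mathbb Z+\tfrac12$) lie in different graded pieces forces $b_r=0$ for all $r$ and $a_m=0$ for all $m\neq 2i$, i.e. $D_{G_i,y}={\rm ad}(a_{2i}L_{2i})$ with $2i$ an odd integer. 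This is identical in form to Lemma 3.3.

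Next I would choose the seed pair $G_{-1/2},G_{1/2}$, so that the resulting adjoints are $L_{-1},L_{1}$, and prove the analogue of Lemma \ref{lem2}: if $\Delta(G_{-1/2})=\Delta(G_{1/2})=0$ then $\Delta(G_r)=\Delta(L_j)=0$ for all $r\in\mathbb Z+\tfrac12$ and $j\in\mathbb Z$. Testing against $y=G_r$ produces two expressions $a_{-1}(-\tfrac12-r)G_{r-1}$ and $a_{1}(\tfrac12-r)G_{r+1}$ for $\Delta(G_r)$; as $G_{r-1},G_{r+1}$ are linearly independent both coefficients must vanish, and checking the cases $r=\pm\tfrac12$ and $r\neq\pm\tfrac12$ gives $\Delta(G_r)=0$. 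Testing against $y=L_j$ gives $a_{-1}(-1-j)L_{j-1}$ and $a_{1}(1-j)L_{j+1}$, and the same independence argument yields $\Delta(L_j)=0$. A convenient feature of the symmetric pair $\pm\tfrac12$ is that the central term in $[L_{\pm1},L_j]$ carries the factor $(\pm1)^3-(\pm1)=0$, so no $C$-contribution ever intervenes; had I instead used $G_{1/2},G_{3/2}$ (mirroring $G_0,G_1$ more literally) I would merely have to carry along the harmless term $2a_3\delta_{j,-3}C$ arising from $[L_3,L_{-3}]$.

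I would then reprove the analogue of Lemma \ref{lem3}: if $\Delta(G_r)=0$ for all $r\in\mathbb Z+\tfrac12$, then for any $x=\sum_t\alpha_tL_t+\sum_s\beta_sG_s+aC$ the identity $\Delta(x)={\rm ad}(a_{2i}L_{2i})(x)$ expands into a combination of $L_{t+2i}$, $G_{s+2i}$ and central terms; letting $i$ range over $\mathbb Z+\tfrac12$ with $|i|$ large isolates each graded component and forces $\Delta(x)=0$. Finally the theorem follows exactly as in the proof of Theorem \ref{thm1}: given any $2$-local superderivation $\Delta$, subtract the inner superderivation $D_{G_{-1/2},G_{1/2}}$ agreeing with $\Delta$ on $G_{\pm1/2}$ to form $\Delta_1$ with $\Delta_1(G_{-1/2})=\Delta_1(G_{1/2})=0$; the two preceding lemmas give $\Delta_1\equiv0$, whence $\Delta=D_{G_{-1/2},G_{1/2}}$ is a superderivation. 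The only real obstacle is cosmetic rather than conceptual, namely verifying that the half-integer seed pair one selects still allows both coefficient functions to be annihilated for every test generator; this is precisely the linear-independence-of-graded-components bookkeeping displayed above, and once the pair is fixed the remainder is the same computation as for $\epsilon=0$.
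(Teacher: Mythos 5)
Your proposal is correct and follows exactly the route the paper intends: the paper's own proof of this theorem is the one-line remark that the $\epsilon=\tfrac12$ case follows by the same method as Theorem 3.2, and your writeup is precisely that adaptation, with the seed pair $G_0,G_1$ replaced by $G_{-1/2},G_{1/2}$ and the graded-component bookkeeping carried out correctly. No gaps.
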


\section{ 2-local superderivation on the Super ${\rm W(2,2)}$ algebra}

In this section we shall study 2-local superderivations on the super ${\rm W(2,2)}$ algebra ${\rm SW(2,2)}$, which is an infinite dimensional Lie superalgebra introduced by
I. Mandal to  study the $N = (1, 1)$ supersymmetric
extension of Galilean conformal algebra (SGCA) in 2d (see \cite{GPB,M}).

By definition,  ${\rm SW(2,2)}$  is a Lie superalgebra over $\mathbb{C }$ with a basis
$$\{L_m, I_m, G_m, Q_{m},C_{1},C_{2}\mid m\in\mathbb{Z}\}$$
and the following non-vanishing relations:
\begin{eqnarray*}
&& [L_m,L_n]=(m-n)L_{n+m}+{1\over12}\delta_{m+n,0}(m^3-m)C_{1},\\
&&[L_m, I_n]=(m-n)I_{m+n}+{1\over12}\delta_{m+n,0}(m^3-m)C_{2},\\
&&[L_m,G_r]=(\frac{m}{2}-r)G_{m+r},\ \ \ \ \ \ \ \ [L_m,Q_r]=(\frac{m}{2}-r)Q_{m+r},\\
&&[G_r,G_s]=2L_{r+s}+{1\over3}\delta_{r+s,0}(r^{2}-\frac{1}{4})C_{1},\\
&&[G_r,Q_s]=2I_{r+s}+{1\over3}\delta_{r+s,0}(r^{2}-\frac{1}{4})C_{2}, \\
&&[I_m,G_r]=(\frac{m}{2}-r)Q_{m+r}
\end{eqnarray*}
for all  $m,n, r,s\in\mathbb{Z}$.

\begin{lemm}\cite{GPB}\label{lem4.1} For the Lie superalgebra ${\rm SW}(2,2)$,
$${\rm Der(SW(2,2))}={\rm IDer(SW(2,2))}\bigoplus\mathbb{C}D,$$
where $D$ is an outer derivation defined by
\begin{equation}\label{bb}
D(I_{m}) =I_{m}, D(Q_{r}) =Q_{r}, D(C_{2})=C_{2}, D(L_{m})=D(G_{r})=D(C_{1})=0
\end{equation}
for all $ m, r\in \mathbb{Z}$.
\end{lemm}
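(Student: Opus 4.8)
The plan is to compute $\mathrm{Der}(\mathrm{SW}(2,2))$ directly, exploiting the two gradings the algebra carries: the $\mathbb{Z}_2$ super-grading and the $\mathbb{Z}$-grading in which $L_m,I_m,G_m,Q_m$ all have degree $m$ while $C_1,C_2$ have degree $0$. The latter is just the eigenspace decomposition of $\mathrm{ad}(L_0)$, which acts semisimply with integer eigenvalues via $[L_0,x]=-(\deg x)\,x$. Consequently any superderivation $\Delta$ splits as $\Delta=\sum_{n\in\mathbb{Z}}\Delta_n$, where $\Delta_n$ raises the $\mathbb{Z}$-degree by $n$ and each $\Delta_n$ is again a superderivation; I would further decompose each $\Delta_n$ into its even and odd parts. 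It then suffices to classify homogeneous superderivations of each bidegree and reassemble.

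First I would dispose of every component of nonzero $\mathbb{Z}$-degree, treating the even and odd cases uniformly. For $n\neq 0$ put $u:=\Delta_n(L_0)$, a homogeneous element of degree $n$; since $L_0$ is even, $[u,L_0]=-[L_0,u]=n\,u$, so $\Delta_n-\tfrac1n\,\mathrm{ad}(u)$ annihilates $L_0$. A short argument now shows that any degree-$n$ superderivation killing $L_0$ vanishes: applying it to $[L_0,x]=-(\deg x)\,x$ and using the Leibniz rule gives $-(\deg x)\,\Delta_n(x)=[L_0,\Delta_n(x)]=-(\deg x+n)\,\Delta_n(x)$, whence $n\,\Delta_n(x)=0$ and $\Delta_n(x)=0$. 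Thus every homogeneous component of nonzero degree is inner, and the whole problem collapses to the degree-$0$ part.

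The core is therefore the degree-$0$ case. A degree-$0$ superderivation preserves each graded subspace, so on the even side it takes the form $L_m\mapsto a_m L_m+b_m I_m$, $I_m\mapsto c_m L_m+d_m I_m$, $G_r\mapsto e_r G_r+f_r Q_r$, $Q_r\mapsto g_r G_r+h_r Q_r$ (with central corrections at the zero index), and it maps the center into itself. Feeding the Virasoro-type relations $[L_m,L_n]$, $[L_m,I_n]$, $[L_m,G_r]$, $[L_m,Q_r]$ into the Leibniz rule yields Cauchy-type recursions forcing $a_m,b_m$ to be linear in $m$ — hence removable by subtracting suitable multiples of $\mathrm{ad}(L_0)$ and $\mathrm{ad}(I_0)$ — and forcing $c_m,d_m,e_r,f_r,g_r,h_r$ to be constants. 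The brackets $[G_r,G_s]=2L_{r+s}+\cdots$ and $[G_r,Q_s]=2I_{r+s}+\cdots$ then pin those constants down, and I expect the surviving freedom to be a single scalar $\lambda$, giving $\Delta_0=\mathrm{ad}(v)+\lambda D$ with $v\in\mathbb{C}L_0+\mathbb{C}I_0$. The parallel odd degree-$0$ computation, after subtracting $\mathrm{ad}(G_0)$ and $\mathrm{ad}(Q_0)$, should terminate in $0$, so that all odd superderivations are inner.

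The step I expect to be most delicate — and the one most easily overlooked — is that the Leibniz rule must be tested against the \emph{vanishing} brackets $[I_m,I_n]=0$, $[I_m,Q_r]=0$, $[Q_r,Q_s]=0$, and not only the nonzero ones. These are precisely what kill the a priori admissible ``lowering'' map $I_m\mapsto L_m,\ Q_r\mapsto G_r,\ C_2\mapsto C_1$ (with $L,G,C_1\mapsto 0$), which, remarkably, respects every nonvanishing relation: applying it to $0=[I_m,I_n]$ produces $2\gamma(m-n)I_{m+n}$, forcing its coefficient $\gamma$ to vanish. Omitting these relations would spuriously enlarge the outer part, so flagging them is essential. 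Finally I would confirm that $D$ is genuinely outer by observing that $\mathrm{ad}(\alpha L_0+\beta I_0)(I_m)=-\alpha m\,I_m$ can never reproduce the uniform scaling $I_m\mapsto I_m$, so $D\notin\mathrm{IDer}$; together with the degree reductions this establishes $\mathrm{Der}(\mathrm{SW}(2,2))=\mathrm{IDer}(\mathrm{SW}(2,2))\oplus\mathbb{C}D$.
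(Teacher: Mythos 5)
The paper does not prove this lemma at all: it is quoted from the reference [GPB] (Gao--Pei--Bai), so there is no in-paper argument to compare yours against. That said, your outline is the standard proof of such statements and it is sound. The reduction via the $\mathrm{ad}(L_0)$-eigenspace decomposition is correct, including the signs ($[u,L_0]=nu$ for $u$ of degree $n$, so $\Delta_n-\tfrac1n\mathrm{ad}(\Delta_n(L_0))$ kills $L_0$ and hence vanishes), and your degree-zero analysis identifies exactly the right phenomena: the Leibniz constraints from $[L_m,L_n]$ force the diagonal coefficients on $L$ to be additive in $m$, hence of the form $\alpha m+\beta m$ absorbed by $\mathrm{ad}(L_0)$ and $\mathrm{ad}(I_0)$; the constraints from $[L_m,I_n]$, $[L_m,G_r]$, $[G_r,G_s]$, $[G_r,Q_s]$, $[I_m,G_r]$ then leave only the uniform scaling of the $I,Q,C_2$ sector, which is $D$; and your flag about the \emph{vanishing} brackets is exactly the crux --- the candidate lowering map $I_m\mapsto L_m$, $Q_r\mapsto G_r$, $C_2\mapsto C_1$ does respect every nonzero relation and is killed only by $0=[I_m,I_n]$ (and $[I_m,Q_r]$, $[Q_r,Q_s]$), which produce $2\gamma(m-n)I_{m+n}$ etc. Omitting those would wrongly give a two-dimensional outer part, so this is the right thing to emphasize.

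One point you should make explicit rather than leave implicit: for an infinite-dimensional graded algebra the formal decomposition $\Delta=\sum_n\Delta_n$ only converges pointwise, and ``every nonzero-degree component is inner'' does not by itself yield that their sum is a single inner derivation. Here it does, because your own argument shows $\Delta_n=\tfrac1n\mathrm{ad}(\Delta_n(L_0))$ for $n\neq0$, and $\Delta(L_0)$ is a finite linear combination of basis vectors, so only finitely many $\Delta_n$ with $n\neq0$ are nonzero and $\sum_{n\neq0}\Delta_n=\mathrm{ad}\bigl(\sum_{n\neq0}\tfrac1n\Delta_n(L_0)\bigr)$ is genuinely inner. You should also carry out, rather than merely assert, the odd degree-zero computation (that after subtracting $\mathrm{ad}(G_0)$ and $\mathrm{ad}(Q_0)$ nothing survives) and record the central values $\Delta_0(C_1)=0$, $\Delta_0(C_2)=dC_2$ extracted from $[L_2,L_{-2}]$ and $[G_r,Q_{-r}]$; but these are matters of completeness, not of correctness.
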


\begin{lemm} \label{lem4.2}Let  $\Delta$ be a 2-local  superderivation  on ${\rm SW(2,2)}$.  Then for any $x,y\in {\rm SW(2,2)}$, there exists a  superderivation   $D_{x,y}$ of ${\rm SW(2,2)}$ such that $D_{x,y}(x)=\Delta(x)$, $D_{x,y}(y)=\Delta(y)$ and it can be written as
\begin{equation}\label{aq}
\begin{split}
D_{x,y}&={\rm ad}\left(\sum_{k\in\mathbb{Z}}(a_{k}(x,y)L_{k}+b_{k}(x,y)I_{k}+c_{k}(x,y)G_{k}+d_{k}(x,y)Q_{k})\right)
+\lambda(x,y)D,
 \end{split}
 \end{equation}
where $a_{k},b_{k},c_{k},d_{k}$ and $\lambda$ are complex-valued functions on ${\rm SW(2,2)}\times {\rm SW(2,2)}$ and D is given by \eqref{bb}.
\end{lemm}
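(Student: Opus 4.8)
The plan is to derive this lemma directly from the structure theorem for superderivations of ${\rm SW(2,2)}$, in exactly the way Lemma 3.1 was obtained from Lemma \ref{lem2.4} in the super Virasoro setting. First I would invoke the definition of a 2-local superderivation: for the fixed pair $x,y$ there is \emph{some} superderivation $D_{x,y}\in{\rm Der(SW(2,2))}$ satisfying $D_{x,y}(x)=\Delta(x)$ and $D_{x,y}(y)=\Delta(y)$. The whole content of the lemma is then to pin down the shape of an arbitrary element of ${\rm Der(SW(2,2))}$, and this is precisely what Lemma \ref{lem4.1} supplies.

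By Lemma \ref{lem4.1} we have ${\rm Der(SW(2,2))}={\rm IDer(SW(2,2))}\oplus\mathbb{C}D$, with $D$ the outer derivation of \eqref{bb}. Hence I would write $D_{x,y}={\rm ad}(z)+\lambda(x,y)D$ for some $z\in{\rm SW(2,2)}$ and some scalar $\lambda(x,y)\in\mathbb{C}$. Expanding $z$ in the given basis gives $z=\sum_{k\in\mathbb{Z}}(a_{k}L_{k}+b_{k}I_{k}+c_{k}G_{k}+d_{k}Q_{k})+\mu_{1}C_{1}+\mu_{2}C_{2}$. Since $C_{1}$ and $C_{2}$ are central, ${\rm ad}(C_{1})={\rm ad}(C_{2})=0$, so the two central coefficients contribute nothing to the adjoint map and may simply be discarded. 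Reading off the remaining coefficients as functions of the pair $(x,y)$ yields exactly the expression \eqref{aq}.

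There is essentially no obstacle here: the lemma is a bookkeeping restatement of Lemma \ref{lem4.1} combined with the definition of a 2-local superderivation, and the only point deserving a word of justification is that the center acts trivially under ${\rm ad}$. The genuine work of the section lies ahead, in exploiting the concrete coordinate form \eqref{aq}, namely evaluating $D_{x,y}$ on basis elements $L_{i},I_{i},G_{i},Q_{i}$ and comparing coefficients, to force $\Delta$ to coincide with a single global superderivation, in parallel with Lemmas 3.3--3.5. That is where the features distinguishing ${\rm SW(2,2)}$ from the super Virasoro algebra, the extra outer derivation $D$ and the second central element $C_{2}$, will have to be tracked carefully.
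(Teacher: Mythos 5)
Your proof is correct and follows exactly the route the paper takes: the paper's own proof is the one-line observation that Lemma \ref{lem4.1} gives the decomposition ${\rm Der(SW(2,2))}={\rm IDer(SW(2,2))}\oplus\mathbb{C}D$, from which \eqref{aq} is immediate. Your additional remark that the central elements $C_1,C_2$ may be dropped since ${\rm ad}(C_1)={\rm ad}(C_2)=0$ is a worthwhile detail the paper leaves implicit.
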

\begin{proof} By Lemma \ref{lem4.1},  the superderivation $D_{x,y}$ can obviously be written as the form of \eqref{aq}.
\end{proof}

Now we shall give the main result concerning 2-local superderivations on ${\rm SW(2,2)}$.

\begin{theo}\label{theo4.3} Every 2-local superderivation on  ${\rm SW(2,2)}$ is a superderivation.
\end{theo}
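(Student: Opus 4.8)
The plan is to follow the template of the super Virasoro case (Theorem \ref{thm1}), but to account for the extra outer derivation $D$ of \eqref{bb}, which is invisible on the subalgebra spanned by the $L_m,G_r,C_1$ (since $D(L_m)=D(G_r)=D(C_1)=0$) and acts as the identity on the ``$I,Q,C_2$'' sector. By Lemma \ref{lem4.2} every local derivation $D_{x,y}$ has the shape $\mathrm{ad}(u)+\lambda D$. First I would fix $D^{(1)}:=D_{G_0,G_1}$ and pass to $\Delta_1:=\Delta-D^{(1)}$, which is again a $2$-local superderivation (the difference of a $2$-local superderivation and a genuine superderivation is $2$-local) with $\Delta_1(G_0)=\Delta_1(G_1)=0$.

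Next I would establish the exact analogues of Lemmas \ref{lem2} and \ref{lem3}. The analogue of Lemma 3.3 is immediate: if $\Delta_1(G_i)=0$ then, since $D(G_i)=0$ while $[G_k,G_i]$, $[Q_k,G_i]$ and $[L_k,G_i]$ force the $G$-, $L$- and $I$-coefficients to vanish, one gets $D_{G_i,y}=\mathrm{ad}(a_{2i}L_{2i}+b_{2i}I_{2i})+\lambda D$. Evaluating $D_{G_0,\cdot}$ and $D_{G_1,\cdot}$ on $G_i$ and on $L_i$ then forces, exactly as in Lemma \ref{lem2} (the new $I$- and $C_2$-terms only producing further equations that again vanish), $\Delta_1(G_i)=\Delta_1(L_i)=0$ for all $i$. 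Feeding this into the large-index argument of Lemma \ref{lem3} (and using $[I_{2i},I_t]=[I_{2i},Q_t]=0$) gives, for every $x$, $\Delta_1(x)=\lambda_x D(x)$ for some scalar $\lambda_x$; in particular $\Delta_1(C_1)=0$, $\Delta_1(I_j)=\nu_j I_j$, $\Delta_1(Q_j)=\mu_j Q_j$ and $\Delta_1(C_2)=\mu_0 C_2$. Because $\mathrm{ad}(L_0)$ and $D$ act with the same eigenvalues on $I_j$ and on $Q_j$ (namely $-j$ and $1$), testing the pair $(I_j,Q_j)$ yields $\nu_j=\mu_j$.

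The heart of the proof — and the step I expect to be the main obstacle — is to show that all the scaling factors $\mu_j$ are equal. This does \emph{not} follow from pairwise testing: since $\mathrm{ad}(L_0)$ scales $Q_n$ by $-n$, for any prescribed $\mu_m,\mu_n$ one can solve $-m a_0+\lambda=\mu_m$, $-n a_0+\lambda=\mu_n$, so the pair $(Q_m,Q_n)$ never constrains $\mu_m-\mu_n$. The way around this is to exploit the nonlinearity of $\Delta_1$ together with the rigidity coming from $[I_k,Q_n]=[Q_k,Q_n]=0$. By the analogue of Lemma \ref{lem3}, $\Delta_1(Q_m+Q_n)=\lambda_{mn}(Q_m+Q_n)$ with a single scalar $\lambda_{mn}$, i.e. with equal components on $Q_m$ and $Q_n$. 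Now test the pair $(Q_m,Q_m+Q_n)$: writing the matching derivation as $\mathrm{ad}(u)+\lambda D$, the requirement that $\mathrm{ad}(u)(Q_m)+\lambda Q_m=\mu_m Q_m$ be a pure multiple of $Q_m$ forces the $L$-part of $u$ to involve only $L_0$ and $L_{2m}$ and kills its $G$-part; consequently $\mathrm{ad}(u)(Q_n)$ is a combination of $Q_n$ and $Q_{2m+n}$ and, for $n\neq -m$, carries no $Q_m$. Comparing the $Q_m$-component on both sides of $D'(Q_m+Q_n)=\lambda_{mn}(Q_m+Q_n)$ then gives $\mu_m=\lambda_{mn}$; the symmetric pair $(Q_n,Q_m+Q_n)$ gives $\mu_n=\lambda_{mn}$, whence $\mu_m=\mu_n$ whenever $m\neq\pm n$. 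Bridging through a third index $p\neq\pm m$ disposes of the case $n=-m$, so all $\mu_j$ coincide with a common value $\mu$.

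Finally, with $\mu_j\equiv\mu$ and $\nu_j=\mu_j$, the map $\Delta_1$ agrees with $\mu D$ on every basis vector $L_i,I_i,G_i,Q_i,C_1,C_2$. Setting $\Delta_2:=\Delta_1-\mu D$, a $2$-local superderivation vanishing on all basis vectors, one more pass through the large-index argument of Lemma \ref{lem3} shows $\Delta_2(x)=\lambda'_x D(x)$, and the scalar $\lambda'_x$ is pinned to $0$ by pairing $x$ with the rigid generator $Q_0$ (for which $[u,Q_0]$ has no $Q_0$-component, forcing the $D$-coefficient of the matching derivation to vanish). Hence $\Delta_2=0$, so $\Delta_1=\mu D$ and $\Delta=D_{G_0,G_1}+\mu D$ is a (global) superderivation, as claimed. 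The only genuinely new difficulties compared with Theorem \ref{thm1} are the bookkeeping of the $I,Q,C_2$ sector and, above all, the constancy of the $\mu_j$, which is where the $Q_m+Q_n$ nonlinearity trick is indispensable.
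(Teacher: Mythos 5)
Your overall architecture (subtract $D_{G_0,G_1}$, show the resulting $\Delta_1$ kills all $L_i,G_i$, deduce $\Delta_1(x)=\lambda_x D(x)$ from the large-index argument, then normalize away a multiple of the outer derivation $D$) matches the paper's, and your $(Q_m,Q_m+Q_n)$ trick for the constancy of the $\mu_j$ is correct as far as it goes --- though the paper gets this for free, since its Lemma \ref{lem4.6} already produces a \emph{single} scalar $\mu_x$ for each $x$, so it only needs to normalize at the one element $I_0+Q_0$. The genuine gap is in your final step. Pairing $x$ with $Q_0$ does kill the $D$-coefficient $\lambda''$ of the matching derivation $D_{Q_0,x}=\mathrm{ad}(u)+\lambda'' D$ and forces $u=a_0L_0+\sum_k b_kI_k+\sum_k d_kQ_k$; but it does \emph{not} pin $\lambda'_x$ to $0$, because such a $u$ can still reproduce $\lambda'_x D(x)$ on elements whose $L$- or $G$-part is nonzero. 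Concretely, take $x=L_0+I_1$, so that $D(x)=I_1$ and $\Delta_2(x)=\lambda' I_1$. Then $[u,\,L_0+I_1]=(b_1-a_0)I_1+\sum_{k\neq 1}kb_kI_k+\sum_k kd_kQ_k$, which equals $\lambda' I_1$ for \emph{any} prescribed $\lambda'$ (choose $b_1-a_0=\lambda'$). The same escape is available for the pairs $(L_0,x)$, $(I_1,x)$, $(G_0,x)$, $(G_1,x)$ and $(I_0+Q_0,x)$, so none of the test elements you have on hand excludes $\Delta_2(L_0+I_1)\neq 0$.

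This is exactly the difficulty the paper's Lemma \ref{lem4.7} is designed to overcome: it first proves $\Delta(L_p+I_{2p}+Q_{2p})=0$ for odd $p$ (using the rigid form of $D_{I_0+Q_0,y}$ from Lemma \ref{lem4.4}(ii)), and then shows that any local derivation killing $L_p+I_{2p}+Q_{2p}$ must equal $\mathrm{ad}(\zeta_p L_p+\eta_p I_p+\zeta_p I_{2p}+\zeta_p Q_{2p})$ --- in particular it contains no $L_0$, no $D$, and no $I_k$ with $k\neq p,2p$. Pairing $x=L_0+I_1$ with this element forces $\zeta_p=0$ (from the $L_p$-output) and then $\eta_p\, p\, I_p=\lambda' I_1$, hence $\lambda'=0$ once $p\neq 1$. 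You need some such specially tailored test element whose matching derivations cannot move the $L/G$-sector of $x$ into the $I/Q$-sector; without one, the last step of your argument does not close.
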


To prove Theorem 4.3, we need several lemmas.

\begin{lemm} \label{lem4.4} Let $\Delta$ be a 2-local  superderivation on ${\rm SW(2,2)}$.

$(i)$  For a given $ r\in\mathbb{Z}$, if $\Delta(G_{r})=0$, then for any  $y\in {\rm SW(2,2)}$,
\begin{equation}\label{04}
\begin{split}
D_{G_{r},y}&={\rm ad}(a_{2r}(G_{r},y)L_{2r}+b_{2r}(G_{r},y)I_{2r})+
\lambda(G_{r},y)D;
\end{split}
\end{equation}

$(ii)$ If $\Delta(I_{0}+Q_{0})=0$, then for any  $y\in {\rm SW(2,2)}$ we have
\begin{equation}\label{06}
\begin{split}
D_{I_{0}+Q_{0},y}&={\rm ad}(a_{0}(I_{0}+Q_{0},y)L_{0}+a_{1}(I_{0}+Q_0,y)L_{1}+\sum_{_{k\in\mathbb{Z}}}b_{k}(I_{0}+Q_0,y)I_{k}
\\&+
c_{1}(I_{0}+Q_0,y)G_{1}+\sum_{_{k\in\mathbb{Z}}}d_{k}(I_{0}+Q_0,y)Q_{k}),\end{split}
\end{equation}
where $a_{2r},b_{2r},a_0,a_{1},c_{1},b_{k},d_{k} (k\in\mathbb Z)$ and $\lambda$ are  complex-valued functions on ${\rm SW(2,2)}\times{\rm SW(2,2)}$.
\end{lemm}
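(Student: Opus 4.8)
The plan is to handle both parts exactly as in the proof of Lemma 3.3: start from the general expression for $D_{x,y}$ supplied by Lemma \ref{lem4.2}, impose the relevant vanishing hypothesis, expand the resulting ad-action against the structure constants of ${\rm SW(2,2)}$, and read off which coefficient functions must vanish by comparing coefficients of the basis vectors. Throughout I write $X=\sum_{k}(a_kL_k+b_kI_k+c_kG_k+d_kQ_k)$ for the inner part, so that $D_{x,y}={\rm ad}(X)+\lambda D$.

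Part $(i)$ is the direct analogue of Lemma 3.3. Here I would evaluate $0=\Delta(G_r)=D_{G_r,y}(G_r)=[X,G_r]+\lambda D(G_r)$, noting that $D(G_r)=0$ by \eqref{bb}, so $\lambda$ is left completely free. The four families of brackets $[L_k,G_r]$, $[I_k,G_r]$, $[G_k,G_r]$, $[Q_k,G_r]$ produce, respectively, multiples of $G_{k+r}$, $Q_{k+r}$, $L_{k+r}$ (plus a $C_1$-term), and $I_{k+r}$ (plus a $C_2$-term). Since these basis vectors are linearly independent, setting the sum to zero forces $(\tfrac{k}{2}-r)a_k=0$ and $(\tfrac{k}{2}-r)b_k=0$ for every $k$, killing $a_k,b_k$ for $k\neq 2r$, together with $c_k=d_k=0$ for all $k$; the central contributions then vanish automatically. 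Only $a_{2r}L_{2r}$, $b_{2r}I_{2r}$ and $\lambda D$ remain, which is \eqref{04}.

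Part $(ii)$ is the more delicate computation and is where I expect the actual bookkeeping to concentrate. The element $I_0+Q_0$ is inhomogeneous (even plus odd), so I would compute $[X,I_0]$ and $[X,Q_0]$ separately and add them, together with $\lambda D(I_0+Q_0)=\lambda(I_0+Q_0)$ from \eqref{bb}. The decisive structural feature is that ${\rm SW(2,2)}$ has no $[I,I]$, $[I,Q]$ or $[Q,Q]$ relations, so the $b_kI_k$ and $d_kQ_k$ summands of $X$ bracket trivially with both $I_0$ and $Q_0$; hence they impose no constraint and all $b_k,d_k$ remain free, which is precisely why every $I_k$ and $Q_k$ term survives in \eqref{06}. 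The constraints then come only from the $a_kL_k$ and $c_kG_k$ terms, via $[L_k,I_0]=kI_k$, $[L_k,Q_0]=\tfrac{k}{2}Q_k$, $[G_k,I_0]=kQ_k$, and $[G_k,Q_0]=2I_k+\tfrac13\delta_{k,0}(k^2-\tfrac14)C_2$.

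Comparing coefficients finishes the argument. The coefficient of $Q_0$ is $\lambda$, forcing $\lambda=0$; the coefficient of $C_2$ is $-\tfrac1{12}c_0$, forcing $c_0=0$; and for each $k\neq 0$ the coefficients of $I_k$ and $Q_k$ give the coupled relations $ka_k+2c_k=0$ and $c_k+\tfrac12 a_k=0$, whence $a_k(k-1)=0$ and $c_k=-\tfrac12 a_k$. Thus $a_k=c_k=0$ for $k\neq 0,1$, while $a_0$ (whose $L_0$-brackets with $I_0$ and $Q_0$ both vanish) survives freely and $a_1$ survives with $c_1=-\tfrac12 a_1$. Collecting the surviving data $a_0L_0+a_1L_1+\sum_k b_kI_k+c_1G_1+\sum_k d_kQ_k$ and discarding $\lambda D$ yields \eqref{06}. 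The only genuine subtlety is keeping the cross-terms between the $I_0$- and $Q_0$-brackets straight and tracking the single central $C_2$-contribution coming from $[G_0,Q_0]$, so the main risk here is arithmetic rather than conceptual.
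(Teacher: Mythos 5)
Your proposal is correct and follows essentially the same route as the paper: substitute the general form from Lemma \ref{lem4.2}, expand the bracket against $G_r$ (resp.\ $I_0+Q_0$) using the structure constants, and compare coefficients of the independent basis vectors to kill the unwanted coefficient functions. The only cosmetic difference is that you extract $c_0=0$ from the $C_2$-coefficient while the paper reads it off the $I_0$-coefficient after $\lambda=0$; both are valid and lead to the same surviving data.
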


\begin{proof} By Lemma \ref{lem4.2}, we can assume that

 \begin{equation}\label{e}
\begin{split}
D_{G_{r},y}&={\rm ad}(\sum_{k\in\mathbb{Z}}(a_{k}(G_{r},y)L_{k}+b_{k}(G_{r},y)I_{k}+
c_{k}(G_{r},y)G_{k}+d_{k}(G_{r},y)Q_{k}))\\&+
\lambda(G_{r},y)D,
\end{split}
\end{equation}

 \begin{equation}\label{t}
\begin{split}
D_{I_{0}+Q_{0},y}&={\rm ad}(\sum_{k\in\mathbb{Z}}(a_{k}(I_{0}+Q_{0},y)L_{k}
+b_{k}(I_{0}+Q_{0},y)I_{k}+
c_{k}(I_{0}+Q_{0},y)G_{k}\\&+ d_{k}(I_{0}+Q_{0},y)Q_{k}))+
\lambda(I_{0}+Q_{0},y)D,
\end{split}
\end{equation}
where $a_{k},b_{k},c_{k},d_{k}$ and $\lambda$ are  complex-valued functions on ${\rm SW(2,2)}\times{\rm SW(2,2)}$.

(i) When $\Delta(G_{r})=0$, in view of \eqref{e} we obtain
\begin{eqnarray*}
0&=&\Delta(G_{r})=D_{G_{r},y}(G_{r})
\\
&=&[\sum_{k\in\mathbb{Z}}(a_{k}(G_{r},y)L_{k}+b_{k}(G_{r},y)I_{k}+
c_{k}(G_{r},y)G_{k} +d_{k}(G_{r},y)Q_{k}), G_{r}]
\\
& &+\lambda(G_{r},y)D(G_{r})
\\
&=&\sum_{k\in\mathbb{Z}}((\frac{k}{2}-r)a_{k}(G_{r},y)G_{k+r}+(\frac{k}{2}-r)b_{k}(G_{r},y)Q_{k+r}
+ c_{k}(G_{r},y)(2L_{k+r}
\\
& &
+\frac13\delta_{k+r, 0}(k^2-\frac14)C_{1})+
  d_{k}(G_{r},y)(2I_{k+r}+\frac13\delta_{k+r, 0}(k^2-\frac14)C_{2}))
\end{eqnarray*}
 From the above equation, we have   $(\frac{k}{2}-r)a_{k}(G_{r},y)=(\frac{k}{2}-r)b_{k}(G_{r},y)=0$  for all $k\in\mathbb{Z}$, which deduces $ a_{k}(G_{r},y)=b_{k}(G_{r},y)=0$ with $k\neq 2r$. We also get
 $c_{k}(G_{r},y)=d_{k}(G_{r},y)=0$ for all $k\in\mathbb{Z}$. Then Equation \eqref{04} holds.

 (ii) When $\Delta(I_{0}+Q_{0})=0$, in view of \eqref{t} we obtain
\begin{eqnarray*}
0&=&\Delta(I_{0}+Q_{0})=D_{I_{0}+Q_{0},y}(I_{0}+Q_{0})
\\
&=&[  \sum_{k\in\mathbb{Z}}(a_{k}(I_{0}+Q_{0},y)L_{k}+b_{k}(I_{0}+Q_{0},y)I_{k}+
c_{k}(I_{0}+Q_{0},y)G_{k}
\\
& &+ d_{k}(I_{0}+Q_{0},y)Q_{k}),I_{0}+Q_{0}]+\lambda(I_{0}+Q_{0},y)D(I_{0}+Q_{0})
\\
&=&\sum_{k\in\mathbb{Z}}(ka_{k}(I_{0}+Q_{0},y)+2c_{k}(I_{0}+Q_{0},y))I_{k}
\\
& &+\sum_{k\in\mathbb{Z}}(\frac{k}{2}a_{k}(I_{0}+Q_{0},y)+kc_{k}(I_{0}+Q_{0},y))Q_{k}
\\
& &
  +\frac{1}{12}\delta_{k, 0}(k^3-k)a_{k}(I_{0}+Q_{0},y)C_{2}+\frac13\delta_{k, 0}(k^2-\frac14)c_{k}(I_{0}+Q_{0},y)C_{2}
\\
& &
  +\lambda(I_{0}+Q_{0},y)(I_{0}+Q_{0})
\end{eqnarray*}
If $k=0$, then $ 2c_{0}(I_{0}+Q_{0},y)I_{0}+\lambda(I_{0}+Q_{0},y)I_{0}+\lambda(I_{0}+Q_{0},y)Q_{0}=0$, we get $\lambda(I_{0}+Q_{0},y)=c_{0}(I_{0}+Q_{0},y)=0$ and $a_{0}(I_{0}+Q_{0},y)\neq 0$. If $k\neq 0$, then $ka_{k}(I_{0}+Q_{0},y)+2c_{k}(I_{0}+Q_{0},y)=0$ and $kc_{k}(I_{0}+Q_{0},y)+\frac{k}{2}a_{k}(I_{0}+Q_{0},y)=0$, we get $a_{k}(I_{0}+Q_{0},y)=c_{k}(I_{0}+Q_{0},y)=0$ with $k\neq 1$.
  Then \eqref{06} holds. The proof is completed.
\end{proof}

\begin{lemm}  \label{lem4.5}Let $\Delta$ be a 2-local superderivation on  ${\rm SW(2,2)}$ such that $\Delta( G_{0})=\Delta( G_{1})=0$. Then $$\Delta(G_{i})=\Delta(L_{i})=0,  \  \forall i\in\mathbb{Z}.$$
\end{lemm}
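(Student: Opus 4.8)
The plan is to follow the template of Lemma~\ref{lem2}, now using part (i) of Lemma~\ref{lem4.4} to control the derivations $D_{G_0,y}$ and $D_{G_1,y}$. Since $\Delta(G_0)=\Delta(G_1)=0$, applying \eqref{04} with $r=0$ and $r=1$ gives, for every $y\in{\rm SW(2,2)}$,
\begin{equation*}
D_{G_0,y}={\rm ad}(a_0(G_0,y)L_0+b_0(G_0,y)I_0)+\lambda(G_0,y)D,
\end{equation*}
\begin{equation*}
D_{G_1,y}={\rm ad}(a_2(G_1,y)L_2+b_2(G_1,y)I_2)+\lambda(G_1,y)D.
\end{equation*}
The key simplification, exactly as in the super Virasoro case, is that the outer derivation $D$ annihilates every $L_k$ and every $G_k$ (see \eqref{bb}); hence the $\lambda D$ term contributes nothing whenever we evaluate these derivations on an $L_i$ or a $G_i$, and the whole argument reduces to comparing $\mathrm{ad}$-actions.

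First I would fix $i\in\mathbb{Z}$ and set $y=G_i$ in both displays. Using $[L_0,G_i]=-iG_i$, $[I_0,G_i]=-iQ_i$, $[L_2,G_i]=(1-i)G_{i+2}$ and $[I_2,G_i]=(1-i)Q_{i+2}$, I obtain the two expressions
\begin{equation*}
\Delta(G_i)=-i\,a_0(G_0,G_i)G_i-i\,b_0(G_0,G_i)Q_i=(1-i)a_2(G_1,G_i)G_{i+2}+(1-i)b_2(G_1,G_i)Q_{i+2}.
\end{equation*}
For $i\notin\{0,1\}$ the four basis vectors $G_i,Q_i,G_{i+2},Q_{i+2}$ are distinct, so matching coefficients forces every term to vanish and thus $\Delta(G_i)=0$; together with the hypothesis this yields $\Delta(G_i)=0$ for all $i\in\mathbb{Z}$.

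Next I would set $y=L_i$. Here the new feature, compared with Lemma~\ref{lem2}, is that $[I_0,L_i]$ and $[I_2,L_i]$ produce $I$-components (and, for one exceptional index, central terms). Computing $[L_0,L_i]=-iL_i$, $[I_0,L_i]=-iI_i$, $[L_2,L_i]=(2-i)L_{i+2}+\frac{1}{2}\delta_{i+2,0}C_1$ and $[I_2,L_i]=(2-i)I_{i+2}+\frac{1}{2}\delta_{i+2,0}C_2$, the two evaluations give
\begin{equation*}
-i\,a_0 L_i-i\,b_0 I_i=a_2\big((2-i)L_{i+2}+\frac{1}{2}\delta_{i+2,0}C_1\big)+b_2\big((2-i)I_{i+2}+\frac{1}{2}\delta_{i+2,0}C_2\big),
\end{equation*}
where the coefficients are evaluated at $(G_0,L_i)$ and $(G_1,L_i)$ respectively. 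For $i=0$ the left side already vanishes; for $i=2$ the right side vanishes because $2-i=0$; and for every other $i$ the elements $L_i,I_i,L_{i+2},I_{i+2}$ (and, when $i=-2$, the independent central elements $C_1,C_2$) are linearly independent, so comparing coefficients again forces all of $a_0,b_0,a_2,b_2$ to be zero. In each case $\Delta(L_i)=0$, completing the proof.

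The main obstacle is one of bookkeeping rather than of idea: relative to the super Virasoro computation one must also track the $I$- and $Q$-components generated by the $I_{2r}$ summand in \eqref{04}, and verify that the exceptional indices $i\in\{0,1,2,-2\}$—where one of the structural coefficients $-i$, $1-i$ or $2-i$ degenerates, or where a central term appears—are handled either by the standing hypothesis or by the vanishing of the opposite side. The observation that the outer derivation $D$ kills all $L_k$ and $G_k$ is what keeps the $\lambda D$ contribution from ever entering, so no extra case analysis is needed for it.
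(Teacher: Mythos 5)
Your proposal is correct and follows essentially the same route as the paper's proof: apply Lemma~\ref{lem4.4}(i) with $r=0,1$, evaluate both $D_{G_0,y}$ and $D_{G_1,y}$ at $y=G_i$ and $y=L_i$, and compare coefficients of the linearly independent basis vectors, with the hypothesis covering the exceptional indices. Your bookkeeping of the $Q$- and $I$-components, the central terms, and the vanishing of $\lambda D$ on $L_k$ and $G_k$ matches the paper's computation (and is in fact slightly more careful about the $\delta_{i+2,0}C_2$ term).
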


\begin{proof} It is essentially same as that of Lemma \ref{lem2}.
Since $\Delta( G_{0})=\Delta( G_{1})=0$, by using Lemma \ref{lem4.4}, for any $ y \in {\rm SW(2,2)}$,  we can assume that
\begin{equation}\label{a}
\begin{split}
D_{G_{0},y}&={\rm ad}(a_{0}(G_0,y)L_{0}+b_{0}(G_{0},y)I_{0})
+\lambda(G_{0},y)D,
\end{split}
\end{equation}
\begin{equation}\label{b}
\begin{split}
D_{G_{1},y}&={\rm ad}(a_{2}(G_1,y)L_{2}+b_{2}(G_{1},y)I_{2})+\lambda(G_{1},y)D,
\end{split}
\end{equation}
  where   $a_{0},a_2,b_0,b_{2},\lambda$ are  complex-valued functions on ${\rm SW(2,2)}\times{\rm SW(2,2)}$.

Let $i\in\mathbb{Z}$ be a fixed index. Taking $y=G_{i}$ in\eqref{a}  and \eqref{b} respectively, we get
$$\begin{array}{lll}
\Delta(G_{i})&=D_{G_{0},G_{i}}(G_{i})\\[2mm]
&=[a_{0}(G_{0},G_{i})L_{0}+b_{0}(G_{0},G_{i})I_{0}, G_{i}]
+\lambda(G_{0},G_{i})D(G_{i})\\[2mm]
  &= - ia_{0}(G_{0},G_{i})G_{i}- ib_{0}(G_{0},G_{i})Q_{i} \\[2mm]
 \end{array}$$ and
$$\begin{array}{lll}
\Delta(G_{i})&=D_{G_{1},G_{i}}(G_{i})\\[2mm]
&=[a_{2}(G_1,G_{i})L_{2}+b_{2}(G_{1},G_{i})I_{2}),G_{i}]+
\lambda(G_{1},G_{i})D(G_{i})\\[2mm]
  &=  (1-i)a_{2}(G_1,G_{i})G_{i+2}+(1-i)b_{2}(G_{1},G_{i})Q_{i+2}. \\[2mm]
 \end{array}$$
By the above two equations, it follows that
$$ ia_{0}(G_{0},G_{i})G_{i}+ ib_{0}(G_{0},G_{i})Q_{i}+ (1-i)a_{2}(G_1,G_{i})G_{i+2}+(1-i)b_{2}(G_{1},G_{i})Q_{i+2}=0,$$
which implies $a_{0}(G_{0},G_{i})=b_{0}(G_{0},G_{i})=0$ with $i\neq 0$ and $a_{2}(G_1,G_{i})=b_{2}(G_1,G_{i})=0$ with $i\neq 1$. It concludes that  $\Delta(G_{i})=0$.

Similarly, setting $y=L_{i}$ we can also get $\Delta(L_{i})=0$.
Taking $y=L_{i}$ in\eqref{a}  and \eqref{b} respectively,  we get
$$\begin{array}{lll}
\Delta(L_{i})&=D_{G_{0},L_{i}}(L_{i})\\[2mm]
&=[a_{0}(G_{0},L_{i})L_{0}+b_{0}(G_{0},L_{i})I_{0},L_{i}]
+\lambda(G_{0},L_{i})D(L_{i})\\[2mm]
  &= - ia_{0}(G_{0},L_{i})L_{i}- ib_{0}(G_{0},L_{i})I_{i} \\[2mm]
 \end{array}$$ and
$$\begin{array}{lll}
\Delta(L_{i})&=D_{G_{1},L_{i}}(L_{i})\\[2mm]
&=[a_{2}(G_1,L_{i})L_{2}+b_{2}(G_{1},L_{i})I_{2}),L_{i}]+
\lambda(G_{1},L_{i})D(L_{i})\\[2mm]
  &=  a_{2}(G_1,L_{i})((2-i)L_{i+2}+\frac12\delta_{i+2, 0}C_{1})+(2-i)b_{2}(G_{1},L_{i})I_{i+2}. \\[2mm]
 \end{array}$$
By the above two equations, it follows that
\begin{eqnarray*}
& &ia_{0}(G_{0},L_{i})L_{i}+ ib_{0}(G_{0},L_{i})I_{i}+a_{2}(G_1,L_{i})((2-i)L_{i+2}+\frac12\delta_{i+2, 0}C_{1})
\\
& &
+(2-i)b_{2}(G_{1},L_{i})I_{i+2}=0,
\end{eqnarray*}
which implies $a_{0}(G_{0},L_{i})=b_{0}(G_{0},L_{i})=0$ with $i\neq 0$ and $a_{2}(G_1,L_{i})=b_{2}(G_1,L_{i})=0$ with $i\neq 2$. It concludes that  $\Delta(L_{i})=0$.
The proof is finished.
\end{proof}

\begin{lemm}\label{lem4.6} Let $\Delta$ be a 2-local superderivation on  ${\rm SW(2,2)}$ such that $\Delta(G_{i})=0$ for all $i\in\mathbb{Z}$. Then for any $x=\sum_{t\in\mathbb{Z}}(\alpha_{t}L_{t}+\beta_{t}I_{t}+\gamma_{t}G_{t}
+\delta_{t}Q_{t})+aC_1+bC_2\in {\rm SW(2,2)}$, we have
\begin{equation*}
\begin{split}
\Delta(x)=
\mu_{x}\sum_{t\in\mathbb{Z}}(\beta_{t}I_{t}+\delta_{t}Q_{t})+bC_2,
\end{split}
\end{equation*}
where $\alpha_{t},\beta_t ,\gamma_{t},\delta_{t}, a, b,\mu_{x}\in \mathbb{C}$.
\end{lemm}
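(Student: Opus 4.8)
The plan is to use the rigid form of the superderivations attached to the pairs $(G_i,x)$ supplied by Lemma \ref{lem4.4}(i), together with the fact that $\Delta(x)$ is a single element of finite support while the index $i$ ranges freely over $\mathbb Z$. Fix $x=\sum_{t\in\mathbb Z}(\alpha_t L_t+\beta_t I_t+\gamma_t G_t+\delta_t Q_t)+aC_1+bC_2$. For each $i\in\mathbb Z$ the hypothesis $\Delta(G_i)=0$ lets me apply \eqref{04}, so that $D_{G_i,x}={\rm ad}(p_iL_{2i}+q_iI_{2i})+\lambda_i D$ with $p_i=a_{2i}(G_i,x)$, $q_i=b_{2i}(G_i,x)$ and $\lambda_i=\lambda(G_i,x)$. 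I would then compute $\Delta(x)=D_{G_i,x}(x)=[p_iL_{2i}+q_iI_{2i},x]+\lambda_i D(x)$, expanding the bracket with the defining relations of ${\rm SW(2,2)}$.

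The decisive observation is that every term of $[p_iL_{2i}+q_iI_{2i},x]$ sits in degree $2i+t$ for $t$ in the finite support of $x$, producing only $L_{2i+t},I_{2i+t},G_{2i+t},Q_{2i+t}$ and central terms carrying the factor $\delta_{2i+t,0}$, whereas $D(x)=\sum_t(\beta_t I_t+\delta_t Q_t)+bC_2$ is supported on the fixed degrees of $x$ and on $C_2$. Choosing $i$ large enough that $2i+t$ exceeds every degree occurring in $x$ and in $\Delta(x)$ for all $t$ in the support of $x$ (hence $2i+t\neq0$, so the central $\delta$-terms disappear), the bracket is supported on degrees disjoint from those of $\Delta(x)-\lambda_i D(x)$. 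Since these two expressions are equal, both must vanish; thus the bracket is zero and $\Delta(x)=\lambda_i D(x)$. Setting $\mu_x=\lambda_i$ for one such large $i$ gives $\Delta(x)=\mu_x\sum_t(\beta_t I_t+\delta_t Q_t)+\mu_x bC_2$, while if $D(x)=0$ the same reasoning yields $\Delta(x)=0$, which is the asserted formula with all $I,Q,C_2$ coefficients zero.

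The main obstacle is the coefficient of the central element $C_2$. The degree-shift argument pins the non-central part of $\Delta(x)$ to $\mu_x\sum_t(\beta_t I_t+\delta_t Q_t)$ with essentially no effort, but it gives no control over the centre: $D$ restricts to the identity on $C_2$, and the brackets $[L_{2i},\cdot]$, $[I_{2i},\cdot]$ reach $C_2$ only through the degenerate $\delta_{2i+t,0}$ terms that have already been discarded. To obtain the coefficient $b$ asserted in the statement, rather than the $\mu_x b$ that the computation above produces, I would bring in a second family of pairs, for example $(C_2,I_0+Q_0)$: using the relation $[G_0,Q_0]=2I_0-\frac{1}{12}C_2$ one links the scalar by which $\Delta$ acts on $C_2$ to the scalar $\mu_{I_0+Q_0}$, and reconciling this with the computation above forces the central coefficient into the required form. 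This reconciliation of the central contribution, and not the routine vanishing of the shifted terms, is where the real work lies.
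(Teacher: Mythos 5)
Your first two paragraphs reproduce the paper's proof: the paper likewise invokes Lemma \ref{lem4.4}(i) to reduce $D_{G_i,x}$ to ${\rm ad}(a_{2i}L_{2i}+b_{2i}I_{2i})+\lambda(G_i,x)D$, expands $[\,\cdot\,,x]$, and disposes of the shifted and central $\delta_{2i+t,0}$ terms by letting $i$ range over sufficiently many large values (your observation that a single sufficiently large $i$ already forces $\Delta(x)-\lambda_iD(x)$ and the bracket to vanish separately, because they are supported on disjoint sets of degrees, is a slightly cleaner packaging of the same step). Up to that point the argument is complete and correct, and it yields $\Delta(x)=\mu_x\bigl(\sum_{t}(\beta_tI_t+\delta_tQ_t)+bC_2\bigr)$ with $\mu_x=\lambda(G_i,x)$.

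Your third paragraph, however, is chasing something that cannot be proved: the discrepancy you detect in the coefficient of $C_2$ is an error in the statement of the lemma, not a gap to be filled. As written the lemma is false --- the zero map is a $2$-local superderivation with $\Delta(G_i)=0$ for all $i$, and for $x=C_2$ the asserted formula would give $\Delta(C_2)=C_2\neq 0$. The correct conclusion is the one your computation produces, $\mu_x\bigl(\sum_t(\beta_tI_t+\delta_tQ_t)+bC_2\bigr)$; the paper itself silently drops the factor $\lambda(G_i,x)$ from the term $bC_2$ in the last line of its displayed computation, and it is this corrected form that is actually consistent with the later use of the lemma (in Lemma \ref{lem4.7} one has $b=0$, and in Lemma \ref{lem4.8} the $C_2$ component is controlled through Lemma \ref{lem4.7}, which forces $\mu_x=0$ there). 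Your proposed repair via the pair $(C_2,\,I_0+Q_0)$ is in any case unavailable here: Lemma \ref{lem4.6} does not assume $\Delta(I_0+Q_0)=0$, so there is no control on the $\lambda$-component of $D_{C_2,\,I_0+Q_0}$, and no relation linking the scalar on $C_2$ to $\mu_{I_0+Q_0}$ can be extracted at this stage. You should simply state and prove the lemma with the coefficient $\mu_xb$ on $C_2$ and stop after your second paragraph.
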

\begin{proof}
For $x=\sum_{t\in\mathbb{Z}}(\alpha_{t}L_{t}+\beta_{t}I_{t}+\gamma_{t}G_{t}
+\delta_{t}Q_{t})+aC_1+bC_2\in  {\rm SW(2,2)}$, since $\Delta(G_{i})=0$ for any $i\in\mathbb{Z}$, from Lemma \ref{lem4.4} we have
$$\begin{array}{lll}
\Delta(x)&=D_{G_{i},x}(x)=[a_{2i}(G_{i},x)L_{2i}+b_{2i}(G_{i},x)I_{2i},x]+\lambda(G_{i},x)D(x)\\[2mm]
  &=\sum_{t\in\mathbb{Z}}\alpha_{t}a_{2i}(G_{i},x)((2i-t)L_{2i+t}+\frac1{6}\delta_{2i+t, 0}(4i^3-i)C_1)\\[2mm]
   &+\sum_{t\in\mathbb{Z}}\alpha_{t}b_{2i}(G_{i},x)((2i-t)I_{2i+t}+\frac1{6}\delta_{2i+t, 0}(4i^3-i)C_2)\\[2mm]
  &+\sum_{t\in\mathbb{Z}}\beta_{t}a_{2i}(G_{i},x)((2i-t)I_{2i+t}+\frac1{6}\delta_{2i+t, 0}(4i^3-i)C_2)\\[2mm]
 &+\sum_{t\in\mathbb{Z}}((i-t)\gamma_{t}a_{2i}(G_{i},x)G_{2i+t}+
 ((i-t)\delta_{t}a_{2i}(G_{i},x)+\gamma_{t}b_{2i}(G_{i},x))Q_{2i+t})
 \\[2mm] & +\sum_{t\in\mathbb{Z}}\lambda(G_{i}, x)(\beta_{t}I_{t}+\delta_{t}Q_{t})+bC_2. \\[2mm]
 \end{array}$$
By taking enough diffident $i\in\mathbb{Z}$ in the above equation and, if necessary, let these $ i^{,} $s be large enough, we obtain that $\Delta(x)=\mu_{x}\sum_{t\in\mathbb{Z}}(\beta_{t}I_{t}+\delta_{t}Q_{t})+bC_2$. Furthermore, $\mu_{x}=\lambda( G_{i},x)$ is a constant since it is independent on $i$.
\end{proof}

\begin{lemm}\label{lem4.7} Let  $\Delta$ be a 2-local  superderivation  on ${\rm SW(2,2)}$ such that $\Delta(G_{i})=0$ and $\Delta(I_{0}+Q_{0})=0$  for all $i\in \mathbb{Z} $. Then for any odd integer $p$ and $y\in {\rm SW(2,2)}$,  there exist some $\zeta_{p}^{y},\eta_{p}^{y} \in \mathbb{C}$ such that
\begin{equation*}
\begin{split}
D_{L_{p}+I_{2p}+Q_{2p},y}&={\rm ad}(\zeta_{p}^{y}L_{p}+\eta_{p}^{y}I_{p}
+\zeta_{p}^{y}I_{2p}
+\zeta_{p}^{y}Q_{2p}).
\end{split}
\end{equation*}
\end{lemm}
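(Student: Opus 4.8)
The plan is to set $w = L_p + I_{2p} + Q_{2p}$ and compute $D_{w,y}(w)$ from the general expansion of Lemma \ref{lem4.2}, then compare with $\Delta(w)$, which is already determined by Lemma \ref{lem4.6}. Since $\Delta(G_i)=0$ for all $i$, Lemma \ref{lem4.6} applied to $x=w$ gives $\Delta(w)=\mu_w(I_{2p}+Q_{2p})$ for the scalar $\mu_w=\lambda(G_i,w)$, which is independent of $i$; crucially, this scalar is fixed by $\Delta(w)$ and so is the same for every choice of second argument $y$.

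Next I would substitute $D_{w,y}={\rm ad}(\sum_k(a_kL_k+b_kI_k+c_kG_k+d_kQ_k))+\lambda D$ into $D_{w,y}(w)$ and collect coefficients one basis vector at a time, keeping $\lambda$ as an unknown throughout (I will not grind every bracket here). The $L$-components arise only from $[L_k,L_p]$, forcing $a_k=0$ for $k\neq p$. The $G$-components arise only from $[G_k,L_p]$, whose structure constant is $(k+p)-\tfrac{3p}{2}$; because $p$ is odd this never vanishes for integer indices, forcing every $c_k=0$ — this is precisely where the oddness hypothesis is spent. Matching the $I$- and $Q$-components then pins the rest down: one finds $b_k=d_k=0$ except for a free $b_p$, together with $b_{2p}=a_p$, $d_{2p}=a_p$, $d_p=0$, and (from the $I_{2p}$ and $Q_{2p}$ equations) $\lambda=\mu_w$. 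Setting $\zeta_p^y:=a_p$ and $\eta_p^y:=b_p$ yields $D_{w,y}={\rm ad}(\zeta_p^y L_p+\eta_p^y I_p+\zeta_p^y I_{2p}+\zeta_p^y Q_{2p})+\mu_w D$. The central terms cannot appear, since the only coefficients that could produce $C_1$ or $C_2$ (namely $a_{-p},a_{-2p},b_{-p}$) are already zero.

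It then remains to remove the outer summand, i.e. to show $\mu_w=0$; this is where the second hypothesis $\Delta(I_0+Q_0)=0$ enters and is the decisive point. I would specialize $y=I_0+Q_0$. Since $\mu_w$ is the same fixed scalar for this $y$, the form above holds with some $\zeta:=\zeta_p^{I_0+Q_0}$, and I evaluate $D_{w,I_0+Q_0}(I_0+Q_0)$. Using $D(I_0+Q_0)=I_0+Q_0$ and noting that among the brackets only $[L_p,I_0]$ and $[L_p,Q_0]$ survive, this equals $\zeta p\,I_p+\zeta\tfrac{p}{2}Q_p+\mu_w(I_0+Q_0)$. The $2$-local condition forces it to equal $\Delta(I_0+Q_0)=0$, and since $I_p,Q_p,I_0,Q_0$ are linearly independent (as $p\neq0$) we read off $\mu_w=0$ (and, incidentally, $\zeta=0$). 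With $\mu_w=0$ the $D$-term vanishes for every $y$, giving exactly the asserted form.

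The main obstacle is the coefficient bookkeeping of the first computation: tracking which of the six defining bracket relations feeds each basis vector and carrying the unknown $\lambda$ correctly through the $I/Q$ matching so that the identity $\lambda=\mu_w$ emerges. The conceptually essential step, however, is the last one — recognizing that testing against $I_0+Q_0$ isolates $\mu_w$ through the action of the outer derivation $D$, which is what converts the hypothesis $\Delta(I_0+Q_0)=0$ into the vanishing of the outer component.
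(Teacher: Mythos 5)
Your proof is correct, and it reaches the paper's conclusion by a noticeably different route in one key respect. The paper first proves $\Delta(L_p+I_{2p}+Q_{2p})=0$ outright: it invokes the constrained form of $D_{I_0+Q_0,\,y}$ from Lemma \ref{lem4.4}(ii), evaluates that derivation at $L_p+I_{2p}+Q_{2p}$, and reads off $a_0=0$ from the $L_p$-coefficient and then $\mu=0$ from the $I_{2p}$-coefficient; only afterwards does it run the coefficient computation for $D_{L_p+I_{2p}+Q_{2p},\,y}$ against a zero right-hand side, where $\lambda=0$ drops out directly. You instead carry the unknown scalar $\mu_w$ through the main computation, establish the identity $\lambda(w,y)=\mu_w$ for every $y$ (which is legitimate because $\mu_w$ is determined by $\Delta(w)$ alone), and then kill $\mu_w$ at the end by evaluating $D_{w,\,I_0+Q_0}$ on $I_0+Q_0$, where the outer derivation $D$ contributes $\mu_w(I_0+Q_0)$ and nothing else can cancel it. Your version buys two things: it bypasses Lemma \ref{lem4.4}(ii) entirely (the only place the paper uses that part of the lemma is here), and it makes transparent exactly why the hypothesis $\Delta(I_0+Q_0)=0$ is needed --- it is the unique test vector on which the outer derivation acts visibly. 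The paper's version keeps each derivation evaluated only at its ``first'' argument, which is slightly more uniform with the rest of the section. Your coefficient bookkeeping checks out where you state it (in particular $(k-\tfrac{p}{2})c_k=0$ forcing $c_k=0$ for odd $p$, the relations $b_{2p}=d_{2p}=a_p$ from the $I_{3p}$ and $Q_{3p}$ coefficients, and $\lambda=\mu_w$ from the $I_{2p}$ coefficient), so there is no gap.
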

\begin{proof} By $\Delta(G_{i})=0$ for all $i\in \mathbb{Z} $ and Lemma \ref{lem4.6}, we have
\begin{equation}\label{12}
\Delta(L_{p}+I_{2p}+Q_{2p})=
\mu_{L_{p}+I_{2p}+Q_{2p}}(I_{2p}+Q_{2p}),
\end{equation}
where $\mu_{L_{p}+I_{2p}+Q_{2p}}\in \mathbb{C}$. In view of $\Delta(I_{0}+Q_{0})=0$ and Lemma \ref{lem4.4}, we know that  $( x=L_{p}+I_{2p}+Q_{2p})$
$$\begin{array}{lll}
&\Delta(L_{p}+I_{2p}+Q_{2p})\\[2mm]
&=D_{I_{0}+Q_{0},L_{p}+I_{2p}+Q_{2p}}(L_{p}+I_{2p}+Q_{2p})\\[2mm]
&=[a_{0}(I_{0}+Q_0,x)L_{0}+a_{1}(I_{0}+Q_0,x)L_{1}+\sum_{k\in\mathbb{Z}}b_{k}(I_{0}+Q_0,x)I_{k}\\[2mm]
&+c_1(I_{0}+Q_0,x)G_{1}+\sum_{k\in\mathbb{Z}}d_{k}(I_{0}+Q_0,x)Q_{k}, L_{p}+I_{2p}+Q_{2p}]\\[2mm]
&=-pa_{0}(I_{0}+Q_0,x)(L_{p}+2I_{2p}+2Q_{2p})\\[2mm]
& +a_{1}(I_{0}+Q_0,x)((1-p)L_{p+1}+(1-2p)I_{2p+1}+(\frac{1}{2}-2p)Q_{2p+1})\\[2mm]
 &+\sum_{k\in\mathbb{Z}}(k-p)b_{k}(I_{0}+Q_0,x)I_{p+k}+c_{1}(I_{0}+Q_0,x)((1-\frac{p}{2})G_{p+1}
 \\[2mm]&+(1-p)Q_{2p+1}+2I_{2p+1})
 +\sum_{k\in\mathbb{Z}}(k-\frac{p}{2})d_{k}(I_{0}+Q_{0},x)Q_{k+p}. \\[2mm]
 \end{array}$$
Together with \eqref{12}, we get
$$\begin{array}{lll}
&\mu_{L_{p}+I_{2p}+Q_{2p}}(I_{2p}+Q_{2p})\\[2mm]
 &=-pa_{0}(I_{0}+Q_0,x)(L_{p}+2I_{2p}+2Q_{2p})\\[2mm]
 &+a_{1}(I_{0}+Q_0,x)((1-p)L_{p+1}+(1-2p)I_{2p+1}+(\frac{1}{2}-2p)Q_{2p+1})\\[2mm]
 &+\sum_{k\in\mathbb{Z}}(k-p)b_{k}(I_{0}+Q_0,x)I_{p+k}+c_{1}(I_{0}+Q_0,x)((1-\frac{p}{2})G_{p+1}
 \\[2mm]&+(1-p)Q_{2p+1}+2I_{2p+1})
 +\sum_{k\in\mathbb{Z}}(k-\frac{p}{2})d_{k}(I_{0}+Q_{0},x)Q_{k+p}. \\[2mm]
 \end{array}$$
 From the above equation we can easily see that $a_{0}(I_{0}+Q_{0},x)=0$.  Next, observe the coefficient of $I_{2p}$,  and then we  get $\mu_{L_{p}+I_{2p}+Q_{2p}}=0$. So it follows that
\begin{equation}\label{000}
\Delta(L_{p}+I_{2p}+Q_{2p})=0.
\end{equation}
Next, for any $y\in{\rm SW(2,2)}$, by Lemma \ref{lem4.2}, we can assume that
\begin{equation}\label{111}
\begin{split}
&D_{L_{p}+I_{2p}+Q_{2p},y}
\\
&
=ad(\sum_{k\in\mathbb{Z}}(a_{k}(L_{p}+I_{2p}
+Q_{2p},y)L_{k}+b_{k}(L_{p}+I_{2p}
\\
&+Q_{2p},y)I_{k}+c_{k}(L_{p}+I_{2p}+
Q_{2p},y)G_{k}+d_{k}(L_{p}+I_{2p}+Q_{2p},y)Q_{k}))
\\&+
\lambda(L_{p}+I_{2p}+Q_{2p},y)D.
 \end{split}
 \end{equation}
From  \eqref{000} and \eqref{111}, we have  $(x=L_{p}+I_{2p}+Q_{2p})$
$$\begin{array}{lll}
\Delta(x)&=D_{x,y}(x)\\[2mm]
&=[\sum_{k\in\mathbb{Z}}(a_{k}(x,y)L_{k}+b_{k}(x,y)I_{k}+c_{k}(x,y)G_{k}
+d_{k}(x,y)Q_{k}),x]\\[2mm]&+\lambda(x,y)D(x)\\[2mm]
&=\sum_{k\in\mathbb{Z}} a_{k}(x,y)((k-p)L_{p+k}+{1\over12}\delta_{p+k,0}(k^3-k)C_{1}+
(k-2p)I_{k+2p}\\[2mm]&
+{1\over12}\delta_{2p+k,0}(k^3-k)C_{2}+(\frac{k}{2}-2p)Q_{k+2p})
+\sum_{k\in\mathbb{Z}}b_{k}(x,y)((k-p)I_{p+k}\\[2mm]
 &+{1\over12}\delta_{p+k,0}(k^3-k)C_{2})
 +\sum_{k\in\mathbb{Z}} c_{k}(x,y)(2I_{2p+k}+{1\over3}\delta_{2p+k,0}(k^2-\frac{1}{4})C_{2}\\[2mm]&
 +(k-\frac{p}{2})G_{p+k}+(k-p)Q_{k+2p})+\sum_{k\in\mathbb{Z}}(k-\frac{p}{2})d_{k}(x,y)Q_{k+p}
 \\[2mm]&+\lambda(x,y)(I_{2p}+Q_{2p})=0. \\[2mm]
 \end{array}$$
From this, it is easy to see that $(k-p)a_{k}(x,y)L_{k+p}=0$  for all $k\in\mathbb{Z}$. So $a_{k}(x,y)=0$ for all $ k\neq p$. Similarly,  $(k-\frac{p}{2})c_{k}(x,y)G_{p+k}=0$ for all $k\in\mathbb{Z}$. Hence $c_{k}(x,y)=0$. Using this conclusion and observing the coefficients of $I_{3p}$, $Q_{3p}$ in the above equation, we get
\begin{equation*}%\label{33}
-pa_{p}(x,y)+(2p-p)b_{2p}(x,y)=0,
 \end{equation*}
\begin{equation*}%\label{44}
(\frac{p}{2}-2p)a_{p}(x,y)+(2p-\frac{p}{2})d_{2p}(x,y)=0,
 \end{equation*}
which implies $ a_{p}(x,y)=b_{2p}(x,y)=d_{2p}(x,y)$.
 Furthermore, by observing the coefficient of $I_{k},k \neq 3p$ in the above equation we get $\lambda(x,y)=0$ and $b_{k}(x,y)=0$ for all $k \neq p,2p$.
Observing the coefficient of $Q_{k},k \neq 3p$ in the above equation, we get $d_{k}(x,y)=0$ for all $k \neq 2p$.   Finally, set
$\zeta_{p}^{y}=a_{p}(x,y)$, $\eta_{p}^{y}=b_{p}(x,y)$, and then we finish the proof.
\end{proof}

\begin{lemm} \label{lem4.8}Let  $\Delta$ be a 2-local  superderivation  on ${\rm SW(2,2)}$ such that $\Delta(G_{0})=\Delta(G_{1})=\Delta(I_{0}+Q_{0})=0$. Then $\Delta(x)=0$ for all $x\in{\rm SW(2,2)}$.
\end{lemm}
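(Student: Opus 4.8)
The plan is to combine the three preceding lemmas so that the only freedom left in $\Delta$ is a single ``outer'' scalar on each element, and then to annihilate that scalar. First, since $\Delta(G_0)=\Delta(G_1)=0$ is part of the hypothesis, Lemma \ref{lem4.5} gives $\Delta(G_i)=\Delta(L_i)=0$ for all $i\in\mathbb Z$. In particular the hypothesis of Lemma \ref{lem4.6} holds, so for every
$$y=\sum_{t\in\mathbb Z}(\alpha_tL_t+\beta_tI_t+\gamma_tG_t+\delta_tQ_t)+aC_1+bC_2$$
we already know that $\Delta(y)$ carries no $L$- and no $G$-component: it lies in the span of the $I_t,Q_t$ together with $C_2$, and equals $\mu_y\sum_t(\beta_tI_t+\delta_tQ_t)$ up to a central $C_2$-term. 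Thus proving the lemma amounts to showing that this outer contribution vanishes for every $y$.

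The main tool will be Lemma \ref{lem4.7}, whose hypotheses $\Delta(G_i)=0$ and $\Delta(I_0+Q_0)=0$ are now both available (the former from the previous step, the latter by assumption). For an odd integer $p$ set $x_p=L_p+I_{2p}+Q_{2p}$; the lemma provides, for each $y$, scalars $\zeta_p^y,\eta_p^y$ with
$$\Delta(y)=D_{x_p,y}(y)=[\,\zeta_p^yL_p+\eta_p^yI_p+\zeta_p^yI_{2p}+\zeta_p^yQ_{2p},\,y\,].$$
The crucial feature is that the right-hand side is a genuine \emph{inner} superderivation of $y$ (no $D$-term occurs), so I can compute it explicitly from the bracket relations and compare it term by term with the description of $\Delta(y)$ coming from Lemma \ref{lem4.6}.

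Carrying out the comparison, I would argue by cases on the support of $y$. Since $\Delta(y)$ has no $L$- and no $G$-component, the $L_{p+t}$- and $G_{p+t}$-coefficients produced by $[\zeta_p^yL_p,\;\cdot\;]$ must vanish, i.e. $\zeta_p^y\alpha_t(p-t)=0$ and $\zeta_p^y\gamma_t(\frac{p}{2}-t)=0$ for all $t$; choosing $p$ odd and larger than every index in the (finite) support of $y$ forces $\zeta_p^y=0$ as soon as $y$ has a nonzero $L$- or $G$-part. Once $\zeta_p^y=0$ the bracket collapses to $\eta_p^y[I_p,y]$, whose nonzero contributions are of type $I_{p+t}$ (from the $L_t$'s) and $Q_{p+t}$ (from the $G_t$'s); for $p$ large these indices lie outside the support of the $I_t,Q_t$ appearing in $\Delta(y)$, so matching coefficients forces $\eta_p^y=0$ as well, whence $\Delta(y)=0$. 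The remaining cases go the same way: if $y$ has no $L,G$-part but some $I,Q$-part, the high-index $I_{p+t},Q_{p+t}$ arising from $[\zeta_p^yL_p,\beta_tI_t+\delta_tQ_t]$ again force $\zeta_p^y=0$, and then $\Delta(y)=\eta_p^y[I_p,y]=0$ automatically since $I_p$ commutes with every $I_t,Q_t$; and if $y$ is central there is nothing to prove.

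I expect the main obstacle to be organizational rather than conceptual: keeping the case analysis on the support of $y$ straight and, above all, tracking the $\delta$-function central ($C_1,C_2$) terms so that the ``large $p$'' choice genuinely decouples the high-index inner-image terms from the fixed finite support of $y$. It is precisely this decoupling that lets me conclude $\zeta_p^y=\eta_p^y=0$, and hence that the outer scalar $\mu_y$ is zero for every $y$, giving $\Delta\equiv 0$.
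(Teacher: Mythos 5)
Your proposal follows the paper's own argument essentially verbatim: invoke Lemma \ref{lem4.5} to get $\Delta(G_i)=0$, Lemma \ref{lem4.6} to pin $\Delta(y)$ down to the outer part $\mu_y\sum_t(\beta_tI_t+\delta_tQ_t)$ plus a central term, then Lemma \ref{lem4.7} to express $\Delta(y)$ as an explicit inner bracket with $\zeta_p^yL_p+\eta_p^yI_p+\zeta_p^yI_{2p}+\zeta_p^yQ_{2p}$, and finally compare coefficients with $p$ large, splitting into cases according to whether $y$ has a nonzero $L$-, $G$-, or only $I,Q$-part. This matches the paper's Case 1 / Case 2 (Subcases 2.1, 2.2) analysis, so the approach is correct and essentially identical.
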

\begin{proof} Take any but fixed $x=\sum_{t\in\mathbb{Z}}(\alpha_{t}L_{t}+\beta_{t}I_{t}+\gamma_{t}G_{t}
+\delta_{t}Q_{t})+aC_1+bC_2\in {\rm SW(2,2)}$, where $\alpha_{t}$, $\beta_{t}$, $\gamma_{t}$,
$\delta_{t}, a, b\in\mathbb C$ for any $t\in\mathbb Z$.

 Since $\Delta(G_{0})=\Delta(G_{1})=0$, it follows by Lemma \ref{lem4.5} that
\begin{equation}\label{g}
\Delta(G_{i})=0,  \forall i \in \mathbb{Z}.
 \end{equation}
This, together with Lemma \ref{lem4.6}, gives
\begin{equation}\label{x}
\begin{split}
\Delta(x)&=\Delta(\sum_{t\in\mathbb{Z}}(\alpha_{t}L_{t}+\beta_{t}I_{t}
+\gamma_{t}G_{t}
+\delta_{t}Q_{t})+aC_1+bC_2)
\\&=\mu_{x}\sum_{t\in\mathbb{Z}}(\beta_{t}I_{t}
+\delta_{t}Q_{t})+bC_2
\end{split}
 \end{equation}
for some $\mu_{x}\in\mathbb{C}$. By \eqref{g} and $\Delta(I_{0}+Q_{0})=0$, we obtain by Lemma \ref{lem4.7} that
\begin{equation}\label{y}
\begin{split}
\Delta(x)
&=D_{L_{p}+I_{2p}+Q_{2p},x}(x)
\\
&=[\zeta_{p}^{x}L_{p}+\eta_{p}^{x}I_{p}+\zeta_{p}^{x}I_{2p}+\zeta_{p}^{x}Q_{2p},
\\
&
\sum_{t\in\mathbb{Z}}(\alpha_{t}L_{t}
+\beta_{t}I_{t}+\gamma_{t}G_{t}+\delta_{t}Q_{t})+aC_1+bC_2]
\\
&=\sum_{t\in\mathbb{Z}}\zeta_{p}^{x}\alpha_{t}((p-t)L_{t+p}+\frac1{12}\delta_{p+t, 0}(p^3-p)C_1)
\\
&+\sum_{t\in\mathbb{Z}}\zeta_{p}^{x}\beta_{t}((p-t)I_{t+p}+\frac1{12}\delta_{p+t, 0}(p^3-p)C_2)
\\
&+\sum_{t\in\mathbb{Z}}\eta_{p}^{x}\alpha_{t}((p-t)I_{t+p}+\frac1{12}\delta_{p+t, 0}(p^3-p)C_2)
\\
&+\sum_{t\in\mathbb{Z}}\zeta_{p}^{x}\alpha_{t}((2p-t)I_{t+2p}+\frac1{12}\delta_{p+t, 0}(8p^3-2p)C_2)
\\
&+\sum_{t\in\mathbb{Z}}\zeta_{p}^{x}\gamma_{t}(2I_{t+2p}+\frac1{3}\delta_{2p+t, 0}(4p^2-\frac14)C_2)
\\
&+\sum_{t\in\mathbb{Z}}(\frac{p}{2}-t)\zeta_{p}^{x}\gamma_{t}G_{t+p}
\\
&+\sum_{t\in\mathbb Z}((\frac{p}{2}-t)\zeta_{p}^{x}\delta_{t}+(\frac{p}{2}-t)\eta_{p}^{x}\gamma_{t})Q_{t+p}
\\
&
+\sum_{t\in\mathbb{Z}}((p-t)\zeta_{p}^{x}\gamma_{t}+(2p-\frac{t}{2})\zeta_{p}^{x}\alpha_{t})Q_{t+2p}.
 \end{split}
 \end{equation}
Next the proof is divided into two cases according to the cases of $(\alpha_{t})_{t\in\mathbb{Z}}$ and $(\gamma_{t})_{t\in\mathbb{Z}}$.

{\bf Case 1.}
$(\alpha_{t})_{t\in\mathbb{Z}}$ is  not  a zero sequence, i.e., $x=\sum_{t\in\mathbb{Z}}(\alpha_{t}L_{t}+\beta_{t}I_{t}
+\gamma_{t}G_{t}+\delta_{t}Q_{t})+aC_1+bC_2$. Hence there is a nonzero term
$\alpha_{t_{0}}L_{t_{0}}$  in $x=\sum_{t\in\mathbb{Z}}(\alpha_{t}L_{t}+\beta_{t}I_{t}
+\gamma_{t}G_{t}+\delta_{t}Q_{t})+aC_1+bC_2$ for some $t_{0} \in \mathbb{Z}$. Take two integers $p=p_{1}$ and $p=p_{2}$ in \eqref{y} such that $p_{i}-t_{0}\neq 0,i=1,2$, then by
$(p_{i}-t_{0})\zeta_{p_{i}}^{x}\alpha_{t_{0}}L_{p_{i}+t_{0}}=0$ in \eqref{y} we have $\zeta_{p_{i}}^{x}=0$.  By
\eqref{x} and \eqref{y}, we have
$$\begin{array}{lll}
\Delta(x)&= \mu_{x}\sum_{t\in\mathbb{Z}}(\beta_{t}I_{t}
+\delta_{t}Q_{t})+bC_2\\[2mm]
&= \sum_{t\in\mathbb{Z}}\eta_{p_{i}}^{x}\alpha_{t}((p_{i}-t)I_{t+p_{i}}+\frac1{12}\delta_{p_{i}+t, 0}(p_{i}^3-p_{i})C_2)\\[2mm]&
+(\frac{p_{i}}{2}-t)\eta_{p_{i}}^{x}\gamma_{t}Q_{p_{i}+t}), i=1,2.\\[2mm]
 \end{array}$$
 Hence
$$ \mu_{x}\sum_{t\in\mathbb{Z}}\beta_{t}I_{t}= \sum_{t\in\mathbb{Z}}\eta_{p_{i}}^{x}\alpha_{t}((p_{i}-t)I_{t+p_{i}}+\frac1{12}\delta_{p_{i}+t, 0}(p_{i}^3-p_{i})C_2),$$
 $$\mu_{x}\sum_{t\in\mathbb{Z}}\delta_{t}Q_{t}= \sum_{t\in\mathbb{Z}}(\frac{p_{i}}{2}-t)\eta_{p_{i}}^{x}\gamma_{t}Q_{p_{i}+t}, i=1,2.$$
By taking $p_{1}$ and $p_{2}$  in the above equation such that $p_{1},p_{2},p_{1}-p_{2}$ are  large enough, we see that $\Delta(x)=0$.

{\bf Case 2.}
$(\alpha_{t})_{t\in\mathbb{Z}}$  is a zero sequence, i.e., $x=\sum_{t\in\mathbb{Z}}(\beta_{t}I_t+\gamma_{t}G_{t}+\delta_tQ_t)+aC_1+bC_2$.

 {\bf Subcase 2.1.} If $(\gamma_{t})_{t\in\mathbb{Z}}$  is  not a zero sequence,  there is a  nonzero term
$\gamma_{t_{0}}G_{t_{0}}$  in $x=\sum_{t\in\mathbb{Z}}(\beta_tI_t+\gamma_{t}G_{t}+\delta_tQ_t)+aC_1+bC_2$ for some $t_{0} \in \mathbb{Z}$. Take two integers $p=p_{1}$ and $p=p_{2}$ in \eqref{y} such that $\frac{p_{i}}{2}-t_{0}\neq 0,i=1,2$, then by
$(\frac{p_{i}}{2}-t_{0})\zeta_{p_{i}}^{x}\gamma_{t_{0}}G_{p_{i}+t_{0}}=0$ in \eqref{y} we have $\zeta_{p_{i}}^{x}=0$. By
\eqref{x} and \eqref{y}, we have
 $$\begin{array}{lll}
\Delta(x)= \mu_{x}\sum_{t\in\mathbb{Z}}(\beta_{t}I_{t}
+\delta_{t}Q_{t})+bC_2= \sum_{t\in\mathbb{Z}}(\frac{p_{i}}{2}-t)\eta_{p_{i}}^{x}\gamma_{t}Q_{p_{i}+t}, i=1,2.\\[2mm]
 \end{array}$$
 Hence $\Delta(x)=0$.

{\bf Subcase 2.2.} If $(\gamma_{t})_{t\in\mathbb{Z}}$  is a  zero sequence, i.e.,  $x=\sum_{t\in\mathbb{Z}}(\beta_{t}I_{t}
+\delta_{t}Q_{t})+aC_1+bC_2$.  Then by  \eqref{x} and \eqref{y} we have
$$\begin{array}{lll}
\Delta(x)&= \mu_{x}\sum_{t\in\mathbb{Z}}(\beta_{t}I_{t}
+\delta_{t}Q_{t})+bC_2\\[2mm]
&= \sum_{t\in\mathbb{Z}}( \zeta_{p}^{x}\beta_t((p-t)I_{t+p}+\frac1{12}\delta_{p+t, 0}(p^3-p)C_2)
+(\frac{p}{2}-t)\zeta_{p}^{x}\delta_{t}Q_{p+t}).\\[2mm]
 \end{array}$$
 By taking enough diffident $p$ in the above equation and, if necessary, let these $ p{'}$s  be large enough, we obtain that $\Delta(x)=0$.
 The proof is completed.
\end{proof}

Now we are to prove Theorem \ref{theo4.3}.

\textit{Proof of Theorem} \ref{theo4.3} : Let $\Delta$ be a 2-local superderivation on  ${\rm SW(2,2)}$. Take a superderivation $D_{G_0,G_1}$ such that
\begin{equation*}
\Delta(G_0)=D_{G_0,G_1}(G_0),\  \Delta(G_1)=D_{G_0,G_1}(G_1).
\end{equation*}
Set $\Delta_1=\Delta-D_{G_0,G_1}$.  Then $\Delta_1$ is a 2-local
superderivation such that $\Delta_1(G_0)=\Delta_1(G_1)=0$.   By Lemma
\ref{lem4.5}, $\Delta_1(G_i)=0$ for all $i\in\mathbb{Z}$. Combining with Lemma \ref{lem4.6}, we have $\Delta_{1}(I_{0}+Q_{0})=\mu_{I_{0}+Q_{0}}(I_{0}+Q_{0})$ for some
$ \mu_{I_{0}+Q_{0}}\in\mathbb{C}$. Now, set $\Delta_{2}=\Delta_{1}-\mu_{I_{0}+Q_{0}}D$, then $\Delta_{2}$ is a 2-local derivation such that
\begin{eqnarray*}
\Delta_{2}(G_{0})&=&\Delta_{1}(G_{0})-\mu_{I_{0}+Q_{0}}D(G_{0})=0,
\\
\Delta_{2}(G_{1})&=&\Delta_{1}(G_{1})-\mu_{I_{0}+Q_{0}}D(G_{1})=0,
\\
\Delta_{2}(I_{0}+Q_{0})&=&\Delta_{1}(I_{0}+Q_{0})
-\mu_{I_{0}+Q_{0}}D(I_{0}+Q_{0})
\\
&=&\mu_{I_{0}+Q_{0}}(I_{0}+Q_{0})-\mu_{I_{0}+Q_{0}}(I_{0}+Q_{0})=0.
\end{eqnarray*}
By Lemma \ref{lem4.8}, it follows that $\Delta_2=\Delta - D_{G_{0},G_{1}}-\mu_{I_{0}+Q_{0}}D\equiv0$.    Thus $\Delta=D_{G_0,G_1}+\mu_{I_{0}+Q_{0}}D$  is a
superderivation. The proof is completed.
\hfill$\Box$

\end{document}